\newtheorem{theorem}{Theorem}[section]
\newtheorem{lemma}[theorem]{Lemma}
\newtheorem{corollary}[theorem]{Corollary}
\theoremstyle{definition}
\newtheorem{definition}[theorem]{Definition}
\newtheorem{remark}[theorem]{Remark}
\newcommand{\R}{\mathbb R}
\newcommand{\N}{\mathbb N}
\numberwithin{equation}{section}
\begin{document}

\pagenumbering{arabic}

  \title[ Strong convergence theorems for strongly monotone mappings]
 {Strong convergence theorems for strongly monotone mappings in Banach spaces}
\author{ M.O. Aibinu$^{1}$, O.T. Mewomo$^2$}
\address{$^{1, 2}$ School of Mathematics, Statistics and Computer Science, University of KwaZulu-Natal, Durban, South Africa.}
\address{$^1$ DST-NRF Center of Excellence in Mathematical and Statistical Sciences (CoE-MaSS).}
\email{ }
 \email{  $^1$moaibinu@yahoo.com, $^1$216040407@stu.ukzn.ac.za}
\email{$^2$mewomoo@ukzn.ac.za }

 \keywords{ Range condition, Strongly monotone, Lyapunov function, Strong convergence.\\
{\rm 2010} {\it Mathematics Subject Classification}: 47H06, 47H09, 47J05, 47J25.\\
{\bf This manuscript should be cited as}: M.O. Aibinu, O.T. Mewomo, Strong convergence theorems for strongly monotone mappings in Banach spaces, Boletim da Sociedade Paranaense de Matemática, (2018), DOI:10.5269/bspm.37655}

\begin{abstract}
Let $E$ be a uniformly smooth and uniformly convex real Banach space and $E^*$ be its dual space. Suppose $A : E\rightarrow  E^*$ is bounded, strongly monotone and satisfies the range condition such that $A^{-1}(0)\neq \emptyset$. Inspired by Alber \cite{b1}, we introduce Lyapunov functions and use the new geometric properties of Banach spaces to show the strong convergence of an iterative algorithm to the solution of $Ax=0$.
\end{abstract}

\maketitle

\section{Introduction}
\par Let $H$ be a real Hilbert space. A mapping $A : D(A)\subset H\rightarrow H$ is said to be monotone if for every $x, y \in D(A)$, we have $$\left\langle x - y, Ax-Ay\right\rangle \geq 0.$$ 
$A$ is called maximal monotone if it is monotone and the range of $(I+tA)$ is all of $H$ for some $t>0$. Consider the following problem:
 \begin{equation}\label{h1}
\mbox{find u} \ \in H \ \mbox{such that} \  0 \in Au,
\end{equation}
where $A$ is a maximal monotone mapping on $H$. This is a typical way of formulating many problems in nonlinear analysis and optimization. A well-known method for solving (\ref{h1}) in a Hilbert space is the proximal point algorithm: $x_1 \in H$ and 
$$x_{n+1} = J_{r_n}x_n,\ \ n\in \N,$$
 introduced by Martinet \cite{b36} and studied further by Rockafellar \cite{b5} and a host of other authors. Monotone mappings were first studied in Hilbert spaces by Zarantonello \cite{b38}, Minty \cite{b39}, Ka$\check{c}$urovskii \cite{b40} and a host of other authors.  Interest in monotone mappings stems mainly from their usefulness in numerous applications. Consider for example (see e.g Chidume et al. \cite{b43}), the following: Let $f : E \rightarrow \R$ be a proper and convex function. The subdifferential of $f$ at $x\in E$ is defined by 
$$\partial f(x)=\left\{x^*\in E^*: f(y)-f(x)\geq\left\langle y-x, x^*  \right\rangle \forall ~~y\in E \right\}.$$
Monotonicity of $\partial f : E \rightarrow 2^{E^*}$ on $E$ can be easily verified, and that $0\in \partial f(x)$
if and only if $x$ is a minimizer of $f.$  Setting $\partial f = A$, it follows that solving the inclusion $0\in Au$ in this case, is the same as solving for a minimizer of $f$.  Several existence theorems have been established for the equation $Au = 0$ when $A$ is of the monotone-type (see e.g., Deimling \cite{b41}; Pascali and Sburlan \cite{b11}). Let $E$ be a real normed space and let $J_p, (p > 1)$ denote the generalized duality mapping from $E$ into $2^{E^*}$ given by 
$$J_p(x) =\left\{ f \in E^* :\left\langle x, f \right\rangle = {\|x\|}^p, \| f\|={\| x\|}^{p-1} \right\},$$
where $E^*$ denotes its dual space and $\left\langle . , .\right\rangle$, the generalized duality pairing. If $E$ is a uniformly smooth Banach space with $J_{p} : E \rightarrow E^*$ and $J_{q}^* : E^* \rightarrow E$ being the duality mappings with gauge functions $\nu(t)=t^{p-1}$ and $\nu(s)=s^{q-1}$ respectively, then $J_{p}^{-1}=J_{q}^*$. For $p = 2$, the mapping $J_2$ from $E$ to $2^{E^*}$ is called normalized duality mapping. If there is no danger of confusion, we omit the subscript $p$ of $J_p$ and simply write $J$. If $E$ is smooth, then $J$ is single-valued and onto if $E$ is reflexive (see e.g., Alber and Ryazantseva \cite{b2}, p. 36, Cioranescu \cite{b3}, p. 25-77, Xu and Roach \cite{b7}, Z$\check{a}$linescu  \cite{b37}). 

Let $X$ and $Y$ be real normed linear spaces and $f :U\subset X\rightarrow Y$ be a map with U open and nonempty.  The function $f$ is said to have a G$\hat{a}$teaux derivative at $u\in U$ if there exists a bounded linear map from $X$ into $Y$ denoted by $D_Gf(u)$ such that for each $h$ in $X$, we have 
\begin{equation} \label{b33}
\underset{t\rightarrow 0}\lim \frac{f(u+th)-fu}{t}=\left\langle D_Gf(u),h\right\rangle.
\end{equation}
We say that $f$ is G$\hat{a}$teaux differentiable if it has a G$\hat{a}$teaux derivative at each $u$ in $U$. Let $X$ and $Y$ be real Banach spaces. A mapping $A : D(A) \subset X \rightarrow Y$ is said to be uniformly continuous if for all $\epsilon >  0,$ there exists $\delta >0$ such that for all $x, y \in D(A),~~{\|x - y\|}_X < \delta~~\Rightarrow {\|Ax - Ay\|}_Y<\epsilon$. A function $\psi: [0,\infty)\rightarrow [0,\infty)$ such that $\psi$ is nondecreasing, $\psi(0) = 0$ and $\psi$ is continuous at $0$ is called a modulus of continuity. It follows that $A$ is uniformly continuous if and only if it has a modulus of continuity and $A$ is said to be $\alpha$-H$\ddot{o}$lder continuous if for some $0<\alpha \leq 1$, there exists a positive constant $k$ such that $\psi(t)\leq kt^{\alpha}$ for all $t\in [0,\infty)$. Let $E$ be a smooth Banach space, the single-valued mapping $A :E \rightarrow E^*$
\begin{itemize}
	\item [(i)] is monotone if for each $x,y\in E$, we have $${\left\langle  x - y, Ax - Ay \right\rangle} \geq 0;$$
  \item [(ii)]is $\eta$-strongly monotone if there exist a constant $\eta > 0$ such that for each $x,y\in E$, we have $${\left\langle  x - y, Ax - Ay \right\rangle} \geq \eta {\|x-y \|}^p;$$
  \item [(iii)]is maximum monotone if it is monotone and the range of $(J+tA)$ is all of $E^*$ for some $t>0$;
  \item [(iv)]satisfies the range condition if it is monotone and the range of $(J+tA)$ is all of $E^*$ for all $t>0$.
\end{itemize}
\begin{remark}
Observe that any maximal monotone mapping satisfies the range condition. The converse is not necessarily true. Hence, range condition is weaker than maximal monotone.
\end{remark}
Let  $A :E \rightarrow E$ be a single-valued mapping.
\begin{itemize}
	\item [(i)]$A$ is accretive if for each $x, y \in E$, there exists $j(x-y)\in J(x-y)$ such that\\ $\left\langle j(x-y), Ax-Ay\right\rangle \geq 0;$ 
	\item [(ii)] $A$ is $\eta$-strongly accretive if for each $x, y \in E$, there exists $j(x-y)\in J(x-y)$ and a constant $\eta > 0$ such that $\left\langle j(x-y), Ax-Ay\right\rangle \geq \eta {\|x-y \|}^p;$
  \item [(iii)]$A$ is m-accretive if it is accretive and the range of $(I+tA)$ is all of $E$ for some $t>0$;
  \item [(iv)]$A$ satisfies the range condition if it is accretive and the range of $(I+tA)$ is all of $E$ for all $t>0$.
\end{itemize}
\begin{remark}\label{h3}
 Chidume and Djitte \cite{b6}. For a real $q > 1$, let $E$ be a $q$-uniformly smooth real Banach space and $A : E \rightarrow E$ be a map
with $D(A)= E$. Suppose that $A$ is m-accretive, then $A$ satisfies the range condition.
\end{remark}
However, the converse is not necessarily true. Hence, range condition is weaker than m-accretive. In a Hilbert space, the normalized duality map is the identity map. Hence, in Hilbert spaces, monotonicity and accretivity coincide.
\par There have been extensive research efforts on inequalities in Banach spaces and their applications to iterative methods for solutions of nonlinear equations of the form $Au=0.$ Assuming existence, for approximating a solution of $Au = 0$, where $A$ is of accretive-type, Browder \cite{b44} defined an operator $T : E \rightarrow E$ by $T := I - A$, where $I$ is the identity map on $E$. He called such an operator pseudo-contractive. It is trivial to observe that zeros of $A$ correspond to fixed points of $T$. For Lipschitz strongly pseudo-contractive maps, Chidume \cite{b22} proved the following theorem. 
\begin{theorem}\label{t5}
Chidume \cite{b22}.  Let $E = L_p, 2 \leq p < \infty$, and $K \subset E$ be nonempty closed convex and bounded. Let $T : K \rightarrow K$ be a strongly pseudocontractive and Lipschitz map. For arbitrary $x_1 \in K$, let a sequence $\left\{x_n\right\}$ be defined iteratively by $x_{n+1} = (1 - {\lambda}_n)x_n + {\lambda}_nTx_n, n\in \N$, where ${\lambda}_n \in (0, 1)$ satisfies the following conditions:
 \begin{itemize}
\item[(i)] $\displaystyle \sum^{\infty}_{n=1}{\lambda}_n = \infty$,
\item[(ii)] $\displaystyle \sum^{\infty}_{n=1}{\lambda}^2_n < \infty$.
 \end{itemize}  
 Then, $\left\{x_n\right\}$ converges strongly to the unique fixed point of $T$.
\end{theorem}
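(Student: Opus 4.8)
The plan is to combine the characteristic geometric inequality of $L_p$ spaces for $p\geq 2$ with the defining inequality of strong pseudocontractivity to produce a recursive estimate for $\|x_n-x^*\|^2$, and then to close the argument with a standard lemma on nonnegative recursive real sequences. Throughout, $j(\cdot)$ denotes a selection of the normalized duality map $J$, and $k\in(0,1)$ denotes the constant for which $\langle Tx-Ty, j(x-y)\rangle\leq k\|x-y\|^2$ (this is equivalent to $I-T$ being strongly accretive with $\eta=1-k$).

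First I would record existence and uniqueness of the fixed point $x^*$. Uniqueness is immediate: if $Tx^*=x^*$ and $Ty^*=y^*$, then testing the pseudocontractivity inequality at $x=x^*$, $y=y^*$ gives $\|x^*-y^*\|^2\leq k\|x^*-y^*\|^2$, forcing $x^*=y^*$ since $k<1$. Existence follows from the standard fixed point theory for Lipschitz (hence continuous) strongly pseudocontractive self-maps of a closed convex bounded set. Because each iterate is a convex combination of $x_n$ and $Tx_n$ and $T$ maps $K$ into $K$, the whole sequence $\{x_n\}$ stays in the bounded set $K$; in particular $\|Tx_n-x_n\|\leq M$ for some constant $M$.

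The heart of the proof is the recursion. Since $E=L_p$ with $p\geq 2$ is $2$-uniformly smooth, there is a constant $C>0$ with $\|u+v\|^2\leq \|u\|^2+2\langle v, j(u)\rangle + C\|v\|^2$ for all $u,v\in E$. Writing $x_{n+1}-x^*=(x_n-x^*)+\lambda_n(Tx_n-x_n)$ and applying this with $u=x_n-x^*$, $v=\lambda_n(Tx_n-x_n)$, I obtain
$$\|x_{n+1}-x^*\|^2\leq \|x_n-x^*\|^2+2\lambda_n\langle Tx_n-x_n, j(x_n-x^*)\rangle + C\lambda_n^2\|Tx_n-x_n\|^2.$$
For the cross term I would write $Tx_n-x_n=(Tx_n-Tx^*)-(x_n-x^*)$, use $\langle Tx_n-Tx^*, j(x_n-x^*)\rangle\leq k\|x_n-x^*\|^2$ together with $\langle x_n-x^*, j(x_n-x^*)\rangle=\|x_n-x^*\|^2$, and conclude $\langle Tx_n-x_n, j(x_n-x^*)\rangle\leq -(1-k)\|x_n-x^*\|^2$. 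Bounding the last term by $CM^2\lambda_n^2$ then yields, with $a_n:=\|x_n-x^*\|^2$ and $\delta:=2(1-k)>0$,
$$a_{n+1}\leq (1-\delta\lambda_n)a_n + CM^2\lambda_n^2.$$
Finally I would invoke the standard sequence lemma: if $a_n\geq 0$ satisfies $a_{n+1}\leq(1-\theta_n)a_n+\sigma_n$ with $\theta_n\in(0,1)$, $\sum\theta_n=\infty$ and $\sum\sigma_n<\infty$, then $a_n\to 0$. Here $\theta_n=\delta\lambda_n$ (so $\sum\theta_n=\infty$ by (i), and $\theta_n<1$ eventually since (ii) forces $\lambda_n\to 0$) and $\sigma_n=CM^2\lambda_n^2$ (summable by (ii)). Hence $a_n\to 0$, i.e. $x_n\to x^*$ strongly.

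I expect the main obstacle to be the identification and correct use of the $L_p$ geometric inequality: convergence relies on the squared-norm expansion with a controlled $\|v\|^2$ remainder that is special to $2$-uniformly smooth spaces, which is exactly where the restriction $2\leq p<\infty$ enters. Once that inequality is in hand, strong pseudocontractivity supplies the crucial negative sign of the linear term and the boundedness of $K$ controls the quadratic remainder, so the remaining steps are routine.
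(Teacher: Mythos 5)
Your argument is correct, but note first that the paper itself contains no proof of this statement: Theorem \ref{t5} is quoted purely as background, with the reader referred to Chidume \cite{b22}, so the only meaningful comparison is with that original 1987 argument. Your route is the now-standard streamlined one: you invoke Xu's characterization of $2$-uniform smoothness, $\|u+v\|^2 \leq \|u\|^2 + 2\left\langle v, j(u)\right\rangle + C\|v\|^2$ (valid in $L_p$, $2\leq p<\infty$, with $C=p-1$), let strong pseudocontractivity produce the negative linear term $-2(1-k)\lambda_n\|x_n-x^*\|^2$, control the quadratic remainder by the boundedness of $K$ (which also keeps the iterates in $K$ by convexity), and finish with the standard recursion lemma, correctly handling the tail issue that $2(1-k)\lambda_n<1$ only eventually. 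Chidume's original proof could not proceed this way, since Xu's inequality dates from 1991; it instead worked with $L_p$-specific estimates (parallelogram-type inequalities for $p\geq 2$) to obtain essentially the same recursive estimate for $\|x_n-x^*\|^2$. What your version buys is brevity and generality: it applies verbatim in any $2$-uniformly smooth space, with the restriction $2\leq p<\infty$ entering only through the identification of $L_p$ as $2$-uniformly smooth. The remaining ingredients (uniqueness from the pseudocontractivity inequality, existence from Deimling's fixed point theorem for continuous strongly pseudocontractive self-maps of closed bounded convex sets, single-valuedness of $j$ since $L_p$ is smooth) are all handled correctly, so there is no gap.
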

The above theorem has been generalized and extended in various directions, leading to flourishing areas of research, for the past forty years or so, for numerous authors (see e.g., Censor and Reich \cite{b23}; Chidume \cite{b25}, \cite{b22}, \cite{b6}; Chidume and
Bashir \cite{b27}; Chidume and Chidume \cite{b29}; Chidume and Osilike \cite{b30} and a host of other authors).
\par  However, it occurs that most of the existing results on the approximation of solutions of monotone-type mappings have been proved in Hilbert spaces or they are for accretive-type mappings in Banach spaces. Unfortunately, as has been rightly observed, many and probably most mathematical objects and models do not naturally live in Hilbert spaces. The remarkable success  in approximating the zeros of accretive-type mappings is yet to be carried over to equations involving nonlinear monotone mappings in general Banach spaces.  Perhaps, part of the difficulty in extending the existing results on the approximation of solutions of accretive-type mappings to general Banach spaces is that, since the operator $A$ maps $E$ to $E^*$, the recursion formulas used for accretive-type mappings may no longer make sense under these settings. Take for instance, if $x_n$ is in $E$, $Ax_n$ is in $E^*$ and any convex combination of $x_n$ and $Ax_n$ may not make sense. Moreover, most of the inequalities used in proving convergence theorems when the operators are of accretive-type involve the normalized duality mappings which also appear in the definition of accretive operators. 

\par Alber \cite{b1} introduced a Lyapunov functional which signaled the beginning of the development of new geometric properties in Banach spaces. The Lyapunov function introduced by Alber is suitable for studying iterative methods for approximating solutions of equation $0\in Au$ where $A : E\rightarrow 2^{E^*}$ is of monotone type and other related problems (see e.g \cite{b9}, \cite{b43}, \cite{f3}, \cite{f4}). Inspired by Alber \cite{b1}, our purpose in this paper is to use the new geometric properties to study an iterative scheme for the strongly monotone mappings. Therefore, we introduce Lyapunov functions and prove the strong convergence theorem for strongly monotone mappings in uniformly smooth and uniformly convex Banach spaces.

\section{Preliminaries}
Let $E$ be a real normed space of dimension $\geq$ 2  and let $S := \left\{x \in E : \|x \| = 1\right\}$. $E$ is said to have a G$\hat{a}$teaux differentiable norm (or $E$ is called smooth) if the limit
$$\displaystyle \lim_{t\rightarrow 0}\frac{\|x +ty \|-\|x \|}{t}$$
exists for each $x, y \in S$; E is Fr$\acute{e}$chet differentiable if it is smooth and the limit is attained uniformly for $y\in S$.
 Further, $E$ is said to be uniformly smooth if it is smooth and the limit is attained uniformly for each $x, y \in S$.
 The modulus of convexity of $E$, $\delta_{E}: (0, 2]\rightarrow [0, 1]$ is defined by
 $$\delta_{E}(\epsilon)=\inf \left\{1-\frac{\|x + y \|}{2}  : \|x \|=\|y \|=1, \|x - y \| > \epsilon \right\}.$$
$E$ is uniformly convex if and only if ${\delta}_E(\epsilon) > 0$ for every $\epsilon \in(0, 2]$.
 Let $p > 1$, then $E$ is said to be $p$-uniformly convex if there exists a constant $c > 0$ such that ${\delta}_E(\epsilon) \geq c{\epsilon}^p$ for all $\epsilon \in (0, 2]$. Observe that every $q$-uniformly smooth space is uniformly smooth and every $p$-uniformly convex space is uniformly convex. A normed linear space $E$ is said to be strictly convex if
$$\|x \|=\|y \|=1, x\neq y\Rightarrow \frac{\|x + y \|}{2}<1.$$
Every uniformly convex space is strictly convex. 
Typical examples of such spaces, (see e.g., Chidume \cite{b8}, p. 34, 54) are the $L_p, l_p$, and $W_p^m$ spaces for $1 < p < \infty$, where
\begin{equation}
\rho_{L_p}(\tau)=\rho_{l_p}(\tau)=\rho_{W_p^m}(\tau) \leq \left \{ \begin{array}{cl} \frac{1}{p}{\tau}^p & {\rm}\ 1<p<2,\\
 \frac{p-1}{2}{\tau}^2  & {\rm}\ 2\leq p <\infty ,\end{array} \right.
\end{equation}
and
\begin{equation}
\delta_{L_p}(\epsilon)= \delta_{l_p}(\epsilon)=\delta_{W_p^m}(\epsilon) \geq \left \{ \begin{array}{cl} \frac{1}{2^{p+1}}{\epsilon}^2 & {\rm}\ 1<p<2,\\
 {\epsilon}^p  & {\rm}\ 2\leq p <\infty .\end{array} \right.
\end{equation}
\begin{definition}\label{d2}
Let $E$ be a smooth real Banach space with the dual $E^*$.
\begin{itemize}
	\item[(i)] The function ${\phi} : E \times E \rightarrow \R$ is defined by 
\begin{equation}\label{e1}
{\phi} (x,y)= {\|x \|}^2 - 2\left\langle x, J_2(y) \right\rangle + {\|y \|}^2, \ \mbox{for all} \ x,y \in E,
\end{equation}
where $J_2$ is the normalized duality map from $E$ to $E^*$ introduced by Alber and has been studied by Alber \cite{b1}, Kamimura and Takahashi \cite{b10} and Reich \cite{r17}. 
\item[(ii)]The map $V: E\times E^* \rightarrow \R $ is defined by 
$$V (x,x^*)= {\|x \|}^2 - 2\left\langle x, x^* \right\rangle + {\|x^* \|}^2  ~~ \forall ~~ x \in E, x^*\in E^*.$$
\end{itemize}	
\end{definition}

If $E = H,$ a real Hilbert space, then Eq.(\ref{e1}) reduces to ${\phi} (x,y)= {\|x-y \|}^2$ for $x, y \in H$.\\
Also, it is obvious from the definition of the function ${\phi}$ that
\begin{equation}
(\|x\|- \|y \|)^2 \leq {\phi} (x,y) \leq (\|x\| + \|y \|)^2  \ \mbox{for all} \ x,y \in E.
\end{equation}

\par We need the following lemmas and theorems in the sequel.
 \begin{lemma}\label{l13}
B. T. Kien \cite{b31}. The dual space $E^*$ of a Banach space $E$ is uniformly convex if and only if the duality mapping $J_p$ is a single-valued map which is uniformly continuous on each bounded subset of $E$.
 \end{lemma}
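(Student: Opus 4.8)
The plan is to recognize this statement as the combination of two classical facts from the geometry of Banach spaces, organized around the duality between uniform convexity of $E^*$ and uniform smoothness of $E$. First I would record the duality theorem (see Cioranescu \cite{b3} or Alber and Ryazantseva \cite{b2}): for a Banach space $E$, the dual $E^*$ is uniformly convex if and only if $E$ is uniformly smooth. Since $E^*$ uniformly convex already forces $E^*$ to be reflexive (Milman--Pettis), hence $E$ reflexive, and strictly convex, hence $E$ smooth, both sides of the asserted equivalence entail smoothness of $E$; consequently $J_p$ is single-valued throughout, and the real content reduces to the equivalence
$$E \text{ is uniformly smooth} \iff J_p \text{ is norm-to-norm uniformly continuous on each bounded subset of } E.$$

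For single-valuedness I would note that $J_p$ is precisely the subdifferential of the convex functional $f(x)=\tfrac{1}{p}\|x\|^p$, and that uniqueness of the subgradient at every point is equivalent to G$\hat{a}$teaux differentiability of the norm, i.e. to smoothness of $E$. Thus the single-valued clause is automatic once smoothness is in force, and I would dispatch it first so that the argument can concentrate on uniform continuity.

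For the forward implication ($E$ uniformly smooth $\Rightarrow$ $J_p$ uniformly continuous on bounded sets) I would pass through the modulus of smoothness $\rho_E(\tau)$ and the characterization $\rho_E(\tau)/\tau \to 0$ as $\tau\to 0$. Since $J_p$ is the gradient of $f=\tfrac1p\|\cdot\|^p$, uniform continuity of $J_p$ on bounded sets is equivalent, by the standard theory of convex functions, to uniform Fr$\acute{e}$chet differentiability of $f$ on bounded sets, which in turn is equivalent to uniform smoothness of $E$; the required modulus-of-continuity estimate for $J_p$ is produced directly from $\rho_E$. For the converse I would reverse this chain: uniform continuity of $J_p$ forces $\rho_E(\tau)/\tau\to 0$, hence uniform smoothness of $E$, and then the duality theorem yields uniform convexity of $E^*$.

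The main obstacle I anticipate is the quantitative bridge in the uniform-continuity equivalence: translating the abstract condition $\rho_E(\tau)/\tau\to0$ into an explicit modulus of continuity for $J_p$, and back. This hinges on a careful estimate of $\|J_p x - J_p y\|$ in terms of $\|x-y\|$, uniform over bounded sets, and on handling the normalization when $p\neq 2$, where one must track the scalar factor $\|x\|^{p-1}$ separately from the directional part of the duality map. Once that inequality is secured, both directions close routinely.
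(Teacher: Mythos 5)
Your proposal is mathematically sound, but there is nothing in the paper to compare it against: the paper does not prove this lemma at all. It is quoted verbatim from B.~T.~Kien \cite{b31} as a known tool (used later to justify uniform continuity of $J$ on bounded sets in the proof of Theorem \ref{t6}), so the ``paper's proof'' is simply the citation. Your reconstruction follows the classical textbook route: reduce the statement via the Lindenstrauss duality theorem ($E^*$ uniformly convex $\iff$ $E$ uniformly smooth) to the characterization of uniform smoothness by norm-to-norm uniform continuity of the duality map, with single-valuedness dispatched through smoothness of $E$ (which holds on the forward side because $E^*$ uniformly convex $\Rightarrow$ $E^*$ strictly convex $\Rightarrow$ $E$ smooth, and on the converse side by hypothesis). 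This is correct, and the obstacle you flag is the right one: the classical equivalence is usually stated for the normalized duality map on the unit sphere, so one must splice together the unit-sphere estimate (where $J_p$ and $J_2$ coincide) with the scalar factor $\|x\|^{p-1}$ to pass to arbitrary bounded sets, and separately control a neighborhood of the origin using $\|J_p x\| = \|x\|^{p-1} \to 0$; since $t \mapsto t^{p-1}$ is uniformly continuous on bounded intervals for every $p>1$, this closes. The one caution worth recording is that your chain of equivalences (uniform continuity of the gradient on bounded sets $\iff$ uniform Fr\'echet differentiability of $f=\tfrac1p\|\cdot\|^p$ on bounded sets $\iff$ uniform smoothness of $E$) silently invokes standard but nontrivial convex-analysis facts; if this lemma were to be proved rather than cited, those two equivalences would themselves need references or proofs, which is presumably why the authors chose to cite Kien instead.
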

 
\begin{lemma}\label{l12}
Z$\check{a}$linescu  \cite{b37}. Let $\psi : {\R}^+\rightarrow {\R}^+$ be increasing with $\displaystyle \lim_{t\rightarrow \infty} \psi(t)=\infty.$ Then $J^{-1}_{\psi}$ is single-valued and uniformly continuous on bounded sets of $E^*$ if and only if $E$ is a uniformly convex Banach space.
\end{lemma}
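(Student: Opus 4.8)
The plan is to obtain this as the dual counterpart of Lemma \ref{l13}, the bridge being the inversion identity for duality maps. On a reflexive, smooth, strictly convex space the map $J_\psi$ (the duality map with gauge $\psi$) is a bijection of $E$ onto $E^*$, and under the canonical identification $E^{**}=E$ its inverse is again a duality map, but on the dual space: $J_\psi^{-1}=J^*_{\psi^{-1}}$, where $J^*_{\psi^{-1}}:E^*\to E$ is the duality map of $E^*$ with gauge equal to the inverse function $\psi^{-1}$. This is the general-gauge form of the relation $J_p^{-1}=J_q^*$ already noted in the Introduction (there $\psi(t)=t^{p-1}$ and $\psi^{-1}(s)=s^{q-1}$ with $\frac{1}{p}+\frac{1}{q}=1$); since $\psi$ is increasing with $\lim_{t\to\infty}\psi(t)=\infty$, the inverse gauge $\psi^{-1}$ is increasing with $\lim_{s\to\infty}\psi^{-1}(s)=\infty$ as well. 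Granting this identity, the statement ``$J_\psi^{-1}$ single-valued and uniformly continuous on bounded sets of $E^*$'' says exactly that the genuine duality map of the space $E^*$ has these properties, so the assertion is Lemma \ref{l13} read with $E$ replaced by $E^*$; the proof of Lemma \ref{l13} never uses the power form of the gauge, so it transfers verbatim to the gauge $\psi^{-1}$.

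Assume first that $E$ is uniformly convex. Then $E$ is reflexive by the Milman--Pettis theorem, so $E^{**}=E$ and the inversion identity $J_\psi^{-1}=J^*_{\psi^{-1}}$ holds. Applying Lemma \ref{l13} to the space $E^*$ and using $(E^*)^*=E^{**}=E$ uniformly convex, we conclude that $J^*_{\psi^{-1}}$, hence $J_\psi^{-1}$, is single-valued and uniformly continuous on bounded subsets of $E^*$. Conversely, assume $J_\psi^{-1}$ is a single-valued map on $E^*$ that is uniformly continuous on bounded sets. Being defined on all of $E^*$ forces $J_\psi$ to be surjective; since every element of the range of $J_\psi$ attains its norm, surjectivity means every functional on $E$ is norm-attaining, and James' theorem then gives that $E$ is reflexive. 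With $E^{**}=E$ the identity $J_\psi^{-1}=J^*_{\psi^{-1}}$ is again valid, so $J^*_{\psi^{-1}}$ is single-valued and uniformly continuous on bounded subsets of $E^*$, and Lemma \ref{l13} applied to $E^*$ yields that $(E^*)^*=E$ is uniformly convex.

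I expect the main obstacle to lie not in this reduction but in the two ingredients feeding it. The first is making the inversion identity $J_\psi^{-1}=J^*_{\psi^{-1}}$ rigorous for a merely increasing gauge: one must arrange that $\psi$ (and hence $\psi^{-1}$) defines a bona fide gauge, control possible jumps, and verify the identity on reflexive strictly convex smooth spaces before uniform convexity has been assumed. The second, and more delicate, is the quantitative transfer of uniform continuity through the inversion: the real content is that the modulus of uniform continuity of the inverse map on bounded sets of $E^*$ is controlled by, and controls, the modulus governing the duality map of $E^*$, which in turn is tied to the modulus of smoothness of $E^*$ and, via the Lindenstrauss duality formula, to the modulus of convexity of $E$. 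Pinning down these modulus correspondences is precisely the technical core that Lemma \ref{l13} packages for us; once it is invoked for $E^*$, the stated equivalence follows mechanically.
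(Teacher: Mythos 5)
The paper never proves this statement: Lemma \ref{l12} appears in the Preliminaries as a quoted result of Z$\check{a}$linescu \cite{b37}, so there is no internal proof to compare yours against (Z$\check{a}$linescu's own argument, for the record, runs through uniformly convex functions and the duality of the moduli of convexity and smoothness, not through Kien's theorem). Your plan --- dualize Lemma \ref{l13} via the inversion identity $J_\psi^{-1}=J^*_{\psi^{-1}}$ --- is a sensible reduction in outline, but as written it contains genuine gaps, which you flag yourself and never close.

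First, you state the inversion identity only for reflexive, \emph{smooth}, strictly convex spaces, yet in the forward direction you invoke it for a space that is merely uniformly convex. Uniform convexity gives reflexivity (Milman--Pettis) and strict convexity, but it does \emph{not} imply smoothness (already in the plane there are strictly convex norms with non-smooth points, e.g.\ the norm whose unit ball is the intersection of two discs), so $J_\psi$ may be multi-valued and ``its inverse'' in your bijection sense does not exist; what your argument actually needs is the set-valued identity $x\in J^*_{\psi^{-1}}(f)\Longleftrightarrow f\in J_\psi(x)$, valid in reflexive spaces, which you never formulate or justify. Second, both ingredients feeding the reduction are asserted rather than proved: (a) that Lemma \ref{l13}, stated for the power gauge $J_p$, holds verbatim for the gauge $\psi^{-1}$ --- you claim Kien's proof ``never uses the power form of the gauge,'' which you cannot know without that proof in hand; the legitimate route is the scaling relation $J_\nu(x)=\frac{\nu(\|x\|)}{\|x\|}J_2(x)$ between gauge duality maps, which does transfer single-valuedness and uniform continuity on bounded sets but requires a separate argument near $x=0$; and (b) that $\psi^{-1}$ is a bona fide gauge at all: the hypothesis makes $\psi$ only increasing with $\psi(t)\to\infty$, so $\psi$ may have jumps and intervals of constancy, in which case $\psi^{-1}$ is not a well-defined function and the object $J^*_{\psi^{-1}}$, on which the entire reduction rests, is undefined. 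Acknowledging these points as ``the technical core'' is not the same as resolving them; what you have is a conditional reduction of the lemma to a generalized form of Lemma \ref{l13} that neither you nor the paper supplies.
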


\begin{theorem}\label{t3}
Xu \cite{b18}. Let $E$ be a real uniformly convex Banach space. For arbitrary $r>0$, let $B_r(0):=\left\{x \in E: \|x\| \leq r\right\}$. Then, there exists a continuous strictly increasing convex function
$$g:[0,\infty)\rightarrow [0,\infty),~~ g(0)=0,$$
such that for every $x, y \in B_r(0), j_p(x)\in J_p(x), j_p(y)\in J_p(y)$, the following inequalities hold:
 \begin{itemize}
	\item[(i)]${\|x + y \|}^p \geq {\|x \|}^p + p\left\langle y, j_p(x) \right\rangle + g(\|y \|)$;
	\item[(ii)]$\left\langle x-y, j_p(x)-j_p(y) \right\rangle \geq g(\|x-y \|)$.
 \end{itemize}	
\end{theorem}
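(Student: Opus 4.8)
The plan is to obtain both inequalities as quantitative refinements of a single convexity fact about the functional
$$\varphi(x):=\tfrac{1}{p}\|x\|^p ,$$
whose subdifferential at $x$ is exactly $J_p(x)$, so that every $j_p(x)\in J_p(x)$ is a subgradient. In an arbitrary smooth (indeed any) Banach space one always has the plain subgradient inequality $\varphi(z)-\varphi(x)\geq\left\langle z-x, j_p(x)\right\rangle$, which already yields (i) and the bare monotonicity $\left\langle x-y, j_p(x)-j_p(y)\right\rangle\geq 0$ \emph{without} the term $g$. The entire content of the theorem is the uniform \emph{gap} by which these can be strengthened on the bounded set $B_r(0)$, and that gap is precisely what uniform convexity provides.

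First I would construct the modulus $g$. Since $E$ is uniformly convex, $\delta_E(\epsilon)>0$ for every $\epsilon\in(0,2]$, and the sphere estimate $\|x\|=\|y\|=1,\ \|x-y\|\geq\epsilon\ \Rightarrow\ \|\tfrac{x+y}{2}\|\leq 1-\delta_E(\epsilon)$ can be homogenized (rescaling the radii of $x$ and $y$, with extra care when $\|x\|\neq\|y\|$) into a midpoint estimate for $\varphi$ on $B_r(0)$: there is a nonnegative function $\tilde g$, positive off the origin, with
$$\varphi\!\left(\tfrac{x+y}{2}\right)\leq \tfrac{1}{2}\varphi(x)+\tfrac{1}{2}\varphi(y)-\tilde g(\|x-y\|),\qquad x,y\in B_r(0).$$
Because $\delta_E$ need be neither convex nor continuous, $\tilde g$ inherits only positivity, so the technical step is to replace it by a continuous, strictly increasing, convex $g$ with $g(0)=0$ lying below $\tilde g$ (passing to a suitable convex minorant and then forcing strict monotonicity). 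This regularization is the part I expect to be most delicate.

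Next I would promote the midpoint estimate to the subgradient form, i.e. to (i). Setting the base point at $x$ and the increment at $y$, and either exploiting convexity of $t\mapsto\|x+ty\|^p$ along the segment $[x,x+y]$ or iterating the midpoint inequality dyadically and passing to the limit, one arrives (after absorbing constants into $g$) at
$$\|x+y\|^p\geq\|x\|^p+p\left\langle y, j_p(x)\right\rangle+g(\|y\|),$$
which is exactly (i).

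Finally, (ii) follows mechanically. Dividing (i) by $p$ and applying it once with base $x$, increment $y-x$, and once with the roles reversed gives
$$\varphi(y)-\varphi(x)\geq\left\langle y-x, j_p(x)\right\rangle+\tfrac{1}{p}\,g(\|x-y\|),\qquad \varphi(x)-\varphi(y)\geq\left\langle x-y, j_p(y)\right\rangle+\tfrac{1}{p}\,g(\|x-y\|).$$
Adding these, the $\varphi$-terms cancel and the two pairings combine into $\left\langle x-y, j_p(x)-j_p(y)\right\rangle\geq\tfrac{2}{p}\,g(\|x-y\|)$, which is (ii) after rescaling $g$. The genuine obstacle is therefore Step~1 — producing a convex, strictly increasing, continuous modulus $g$ out of the a priori irregular $\delta_E$ — since Steps~2 and~3 are routine convexity manipulations once the midpoint estimate is in hand.
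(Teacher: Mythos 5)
This statement is quoted in the paper's Preliminaries as a known result of Xu \cite{b18}; the paper itself contains no proof of it, so the only meaningful comparison is with Xu's original argument, and your plan is essentially a reconstruction of that argument and is sound in outline. The decomposition you propose (uniform convexity of $E$ gives a midpoint estimate for $\varphi=\frac{1}{p}\|\cdot\|^p$ on balls with a positive modulus; the modulus is regularized into a continuous, strictly increasing, convex $g$; the subgradient inequality (i) follows; and (ii) follows by applying (i) twice with increments $\pm(y-x)$ and adding) is exactly the known route, and your use of the identification $J_p=\partial\bigl(\tfrac1p\|\cdot\|^p\bigr)$ is what makes the last two steps routine. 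Two points should be tightened. First, your Step 2 needs no dyadic iteration or passage along the segment: apply the midpoint estimate with $u=x$, $v=x+y$, and bound $\varphi\bigl(x+\tfrac{y}{2}\bigr)$ from below by the plain subgradient inequality at $x$; combining the two displays gives (i) in one line. Second, watch the radius bookkeeping: in Step 2 the point $x+y$ lies only in $B_{2r}(0)$, and in Step 3 the increment $y-x$ lies only in $B_{2r}(0)$, so both steps must invoke the modulus associated with radius $2r$, after which a single $g$ valid for both (i) and (ii) on $B_r(0)$ is obtained by taking minima and rescaling; this is harmless but must be said, since the statement asserts one $g$ serving both inequalities. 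The genuinely nontrivial content is, as you correctly flag, Step 1: positivity of the midpoint gap when $\|x\|\neq\|y\|$ (strict convexity of $t\mapsto t^p$ on $[0,r]$ handles well-separated norms, while rescaling to equal norms together with $\delta_E$ handles the rest) and the construction of a convex, strictly increasing, continuous minorant of the resulting positive nondecreasing modulus (a piecewise-linear construction with geometrically decreasing slopes works). Your sketch identifies both difficulties correctly, but at the level of detail given, Step 1 remains a plan rather than a proof, and it is precisely the substance of Xu's theorem.
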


\begin{lemma}\label{l11}
 Xu \cite{bh1}. Let $\left\{a_n\right\}$ be a sequence of nonnegative real numbers satisfying the following
relations:
$$a_{n+1}\leq(1-{\alpha}_n)a_n+{\alpha}_n{\sigma}_n+{\gamma}_n, ~~n\in \N,$$
where
\begin{itemize}
\item[(i)]${\left\{\alpha\right\}}_n\subset (0, 1)$, $\displaystyle\sum_{n=1}^{\infty} {\alpha}_n = \infty$;
\item[(ii)]$\limsup {\left\{\sigma\right\}}_n\leq 0$;
\item[(iii)] ${\gamma}_n\geq 0$, $\displaystyle\sum_{n=1}^{\infty} {\gamma}_n = \infty$.
\end{itemize}
 Then, $a_n\rightarrow 0$ as $n\rightarrow \infty$.
\end{lemma}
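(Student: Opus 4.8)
The plan is to prove this by directly unrolling the recursion and combining it with an $\epsilon$-argument. Before starting, I note that for the conclusion $a_n\to 0$ to hold the series $\sum\gamma_n$ must be \emph{convergent}; I therefore read hypothesis (iii) as $\gamma_n\ge 0$ with $\sum_{n=1}^{\infty}\gamma_n<\infty$, which must be the intended condition (the displayed $\sum\gamma_n=\infty$ is a typographical slip, since otherwise the recursion tolerates sequences bounded away from $0$ and the conclusion fails).

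First I would fix $\epsilon>0$. Since $\limsup_n\sigma_n\le 0$, there is an index $N_0$ with $\sigma_n\le\epsilon$ for all $n\ge N_0$; by enlarging $N_0$ if necessary I may simultaneously arrange that the tail satisfies $\sum_{j=N_0}^{\infty}\gamma_j\le\epsilon$, using convergence of $\sum\gamma_n$. Enlarging $N_0$ only strengthens the estimate $\sigma_n\le\epsilon$, so both smallness requirements are secured at once. For $n\ge N_0$ the recursion then gives $a_{n+1}\le(1-\alpha_n)a_n+\alpha_n\epsilon+\gamma_n$. Setting $b_n:=a_n-\epsilon$ and using $\alpha_n\epsilon-\epsilon=-(1-\alpha_n)\epsilon$, this rearranges to the clean form
$$b_{n+1}\le(1-\alpha_n)b_n+\gamma_n,\qquad n\ge N_0.$$

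Next I would unroll this inequality from $N_0$ to $n$. Writing $P_{j,n}:=\prod_{k=j}^{n}(1-\alpha_k)\in(0,1]$, a straightforward induction yields
$$b_{n+1}\le P_{N_0,n}\,b_{N_0}+\sum_{j=N_0}^{n}\gamma_j\,P_{j+1,n}\le P_{N_0,n}\,b_{N_0}+\sum_{j=N_0}^{\infty}\gamma_j,$$
where the last step uses $P_{j+1,n}\le 1$. The crucial analytic input is that $\sum_{k=1}^{\infty}\alpha_k=\infty$ forces $P_{N_0,n}\to 0$ as $n\to\infty$; this follows from the elementary bound $1-\alpha_k\le\exp(-\alpha_k)$, which gives $P_{N_0,n}\le\exp\!\big(-\sum_{k=N_0}^{n}\alpha_k\big)\to 0$ (note $b_{N_0}$ is a fixed number, so the sign of $b_{N_0}$ is immaterial). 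Taking $\limsup_n$ in the displayed bound and recalling the tail estimate gives $\limsup_n(a_n-\epsilon)\le\epsilon$, i.e. $\limsup_n a_n\le 2\epsilon$. Since $\epsilon>0$ was arbitrary, $\limsup_n a_n\le 0$, and as $a_n\ge 0$ we conclude $a_n\to 0$.

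I expect the only delicate point to be the bookkeeping of the two smallness requirements simultaneously, since $N_0$ is first produced by the $\limsup\sigma_n\le 0$ hypothesis and must then be pushed large enough to also control the tail $\sum_{j\ge N_0}\gamma_j$. Once it is observed that enlarging $N_0$ preserves the $\sigma_n\le\epsilon$ bound, the argument reduces to the standard fact that $\sum\alpha_k=\infty$ annihilates the product $P_{N_0,n}$, and the remaining estimates are routine.
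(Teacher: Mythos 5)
Your proof is correct. Note that the paper itself offers no proof of this lemma: it is quoted from Xu \cite{bh1} as a known result, so the only meaningful comparison is with the literature, and your argument is essentially the standard one from Xu's paper (shift by $\epsilon$, unroll the recursion, and annihilate the product $\prod_{k}(1-\alpha_k)$ using $\sum_k \alpha_k=\infty$ together with $1-t\le e^{-t}$). Your reading of hypothesis (iii) is also the right call: as printed, with $\sum_{n}\gamma_n=\infty$, the statement is false --- take $a_n\equiv 1$, $\sigma_n\equiv 0$, $\gamma_n=\alpha_n$, which satisfies $a_{n+1}\le(1-\alpha_n)a_n+\alpha_n\sigma_n+\gamma_n$ with equality and $\sum_n\gamma_n=\infty$, yet $a_n\not\to 0$ --- so the intended condition is indeed $\sum_n\gamma_n<\infty$, exactly as in Xu's original lemma. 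The individual steps are all sound: the substitution $b_n=a_n-\epsilon$ turns the recursion into $b_{n+1}\le(1-\alpha_n)b_n+\gamma_n$; the induction giving $b_{n+1}\le P_{N_0,n}b_{N_0}+\sum_{j=N_0}^{\infty}\gamma_j$ is valid regardless of the sign of the $b_j$ because each factor $1-\alpha_k$ is positive; and your remark that $P_{N_0,n}b_{N_0}\to 0$ irrespective of the sign of the fixed number $b_{N_0}$ closes the only subtle point. The final bookkeeping, $\limsup_n a_n\le 2\epsilon$ for every $\epsilon>0$ combined with $a_n\ge 0$, completes the proof correctly.
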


\begin{theorem}\label{t8}
Kido \cite{b30}. Let $E^*$ be a real strictly convex dual Banach space with a Fr$\acute{e}$chet differentiable norm and $A$ a maximal monotone operator from $E$ into $E^*$ such that $A^{-1}0\neq \emptyset$. Let $J_tx:=(J+tA)^{-1}x$ be the resolvent of $A$ and $P$ be the nearest point retraction of $E$ onto $A^{-1}0$. Then, for every $x\in E$, $J_tx$ converges strongly to $Px$ as $t\rightarrow \infty$.
\end{theorem}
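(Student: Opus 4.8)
The plan is to set $u_t := J_t x = (J+tA)^{-1}Jx$, interpreting the resolvent in the usual Alber sense so that $Jx\in E^{*}$ is fed into $(J+tA)^{-1}\colon E^{*}\to E$ (this also resolves the apparent type mismatch in the statement). Since the hypotheses on $E^{*}$ make $E$ smooth, strictly convex and reflexive, the normalized duality map $J=J_2$ is single-valued, bijective and maximal monotone, so $J+tA$ is maximal monotone and surjective and $u_t$ is well defined. By construction there is $a_t\in Au_t$ with
\begin{equation*}
J u_t + t\,a_t = Jx,\qquad\text{i.e.}\qquad a_t=\tfrac{1}{t}\bigl(Jx-Ju_t\bigr).
\end{equation*}
I would then prove the theorem in four movements: (i) $\{u_t\}$ stays bounded as $t\to\infty$; (ii) $a_t\to 0$ in $E^{*}$; (iii) every weak cluster point of $u_t$ lies in $A^{-1}(0)$; and (iv) the geometric hypotheses force those cluster points to coincide and the convergence to be strong.

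For (i) I would fix $w\in A^{-1}(0)$ and apply monotonicity of $A$ to the pairs $(u_t,a_t)$ and $(w,0)$, which gives $\langle u_t-w,\,Jx-Ju_t\rangle\ge 0$; expanding and using $\|Ju_t\|=\|u_t\|$, $\|Jx\|=\|x\|$ yields the quadratic bound
\begin{equation*}
\|u_t\|^2\le \|u_t\|\bigl(\|x\|+\|w\|\bigr)+\|w\|\,\|x\|,
\end{equation*}
whence $\sup_{t\ge 1}\|u_t\|<\infty$. Then (ii) is immediate, since $a_t=\frac1t(Jx-Ju_t)$ has bounded numerator, so $a_t\to 0$ strongly in $E^{*}$. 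For (iii), if $t_n\to\infty$ and $u_{t_n}\rightharpoonup v$ (possible because $E$ is reflexive and $\{u_t\}$ is bounded), then for every $(y,y^{*})\in\operatorname{Gr}(A)$ monotonicity gives $\langle u_{t_n}-y,\,a_{t_n}-y^{*}\rangle\ge 0$; letting $n\to\infty$ with the weak$\times$strong pairing yields $\langle v-y,\,-y^{*}\rangle\ge 0$ for all $(y,y^{*})$, and maximality of $A$ then gives $0\in Av$, i.e. $v\in A^{-1}(0)$.

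The decisive step is (iv), and it is where the geometry enters. Testing monotonicity against $v\in A^{-1}(0)$ itself gives $\langle u_{t_n}-v,\,Jx-Ju_{t_n}\rangle\ge 0$, that is $\|u_{t_n}\|^2-\langle v,Ju_{t_n}\rangle\le\langle u_{t_n}-v,\,Jx\rangle\to 0$; combining $\langle v,Ju_{t_n}\rangle\le\|v\|\,\|u_{t_n}\|$ with the weak lower semicontinuity $\|v\|\le\liminf\|u_{t_n}\|$ forces $\|u_{t_n}\|\to\|v\|$. Here I would invoke the assumption that $E^{*}$ has a Fr\'echet differentiable norm: by \v{S}mulian's criterion, together with reflexivity, this is precisely the statement that $E$ has the Kadec--Klee property, so $u_{t_n}\rightharpoonup v$ and $\|u_{t_n}\|\to\|v\|$ upgrade to $u_{t_n}\to v$ strongly. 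Passing to the limit in $\langle u_{t_n}-p,\,Jx-Ju_{t_n}\rangle\ge 0$ (valid for every $p\in A^{-1}(0)$, using that $J$ is norm-to-weak$^{*}$ continuous on the now strongly convergent $u_{t_n}$) produces the variational inequality
\begin{equation*}
\langle v-p,\;Jx-Jv\rangle\ge 0\qquad\text{for all }p\in A^{-1}(0),
\end{equation*}
which, by strict convexity of $E$, is met by a unique point of the closed convex set $A^{-1}(0)$; that point is precisely $Px$, so $v=Px$. Since every subsequence of $\{u_t\}$ thus has a further subsequence converging strongly to the \emph{same} limit $Px$, a standard subsequence argument gives $J_t x\to Px$ strongly as $t\to\infty$. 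I expect the genuine obstacle to be exactly this last movement, namely extracting strong rather than merely weak convergence and pinning the limit to $P$: boundedness and the inclusion of cluster points in $A^{-1}(0)$ are routine, whereas it is only the Fr\'echet differentiability of the dual norm (through the Kadec--Klee property) and the strict convexity of $E$ that make the identification with the nearest-point retraction $Px$ possible.
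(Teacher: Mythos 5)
The paper never proves this statement: Theorem \ref{t8} is quoted verbatim as a preliminary result from Kido \cite{b30} and is used as a black box in Part~2 of the proof of Theorem \ref{t6}, so your attempt has to be judged on its own merits rather than against a proof in the paper. Most of it holds up. Your reading of the resolvent as $(J+tA)^{-1}Jx$ is the correct repair of the type mismatch (it is exactly how the paper uses it, via $y_n=(J+\theta_n^{-1}A)^{-1}Jx_1$); the boundedness estimate in (i), the decay $a_t\to 0$ in (ii), the passage of weak cluster points into $A^{-1}(0)$ by maximality in (iii), and the Kadec--Klee upgrade in (iv) (Fr\'echet differentiability of the dual norm does give reflexivity of $E$ and, via \v{S}mulian's criterion, the property that $u_{t_n}\rightharpoonup v$ together with $\|u_{t_n}\|\to\|v\|$ forces $u_{t_n}\to v$) are all correct, and this is the standard architecture for resolvent-convergence theorems of this type.

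The genuine gap is the final identification. The variational inequality you arrive at,
\[
\langle v-p,\;Jx-Jv\rangle\ \ge\ 0\qquad\text{for all }p\in A^{-1}(0),
\]
characterizes Alber's \emph{generalized} projection of $x$ onto $A^{-1}(0)$, i.e.\ the minimizer over $A^{-1}(0)$ of the Lyapunov functional $\phi(y,x)=\|y\|^2-2\langle y,Jx\rangle+\|x\|^2$; it is \emph{not} the characterization of the metric (nearest-point) projection $P$, whose variational inequality is $\langle v-p,\,J(x-v)\rangle\ge 0$. Outside Hilbert space these are different maps, so the sentence ``by strict convexity of $E$ \dots\ that point is precisely $Px$'' asserts something that is not only unjustified but false in general: take $A=\partial\iota_C$, the normal-cone operator of a closed convex set $C$, which is maximal monotone with $A^{-1}(0)=C$; the defining inclusion $Jx\in Ju_t+tN_C(u_t)$ says exactly that $u_t$ is the generalized projection of $x$ onto $C$ for \emph{every} $t>0$, so the limit is that point, and it differs from the nearest point of $C$ to $x$ in $\ell^p$, $p\neq 2$ --- a space satisfying all the hypotheses. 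What strict convexity of $E$ actually gives you is uniqueness of the solution of \emph{your} variational inequality, hence uniqueness of the limit; so your argument is a complete, correct proof that $J_tx$ converges strongly to the generalized projection of $x$ onto $A^{-1}(0)$, which is, incidentally, the only content the paper ever uses (existence, uniqueness, and strong convergence of $y_n$ to some point of $A^{-1}(0)$). But as a proof of the statement as literally worded --- convergence to the nearest-point retraction --- the last step cannot be repaired; it is the wording of the transcribed statement that needs amending, not the analysis preceding your final paragraph.
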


\begin{lemma}\label{l10}
Kamimura and Takahashi \cite{b10}. Let $E$ be a smooth uniformly convex real Banach space and let $\left\{x_n\right\}$ and $\left\{y_n\right\}$ be two sequences from $E.$ If either $\left\{x_n\right\}$ or $\left\{y_n\right\}$ is bounded and $\phi (x_n, y_n) \rightarrow 0$ as $n \rightarrow \infty$, then $ \| x_n - y_n \| \rightarrow 0$ as $n \rightarrow \infty$.
\end{lemma}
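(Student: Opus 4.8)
The plan is to derive the conclusion from just two ingredients already in hand: the elementary bound $(\|x\|-\|y\|)^2 \le \phi(x,y)$ recorded just after Definition \ref{d2}, and Xu's inequality from Theorem \ref{t3}(i) specialized to $p=2$. The strategy is first to upgrade the hypothesis ``one sequence is bounded'' to ``both sequences are bounded'', then to bound $\phi(x_n,y_n)$ below by a modulus applied to $\|x_n-y_n\|$, and finally to invert that modulus.

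First I would reduce to the case in which both sequences are bounded. Assume, say, that $\{x_n\}$ is bounded (the case of $\{y_n\}$ bounded being symmetric). Since $\phi(x_n,y_n)\to 0$ and $(\|x_n\|-\|y_n\|)^2\le \phi(x_n,y_n)$, the differences $\|x_n\|-\|y_n\|$ tend to $0$; as $\{\|x_n\|\}$ is bounded, so is $\{\|y_n\|\}$. Fix $r>0$ with $\|x_n\|\le r$ and $\|y_n\|\le r$ for every $n$, and note that then $\|x_n-y_n\|\le 2r$.

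Next I would manufacture the modulus. Applying Theorem \ref{t3}(i) on the ball $B_{2r}(0)$ with $p=2$ produces a continuous, strictly increasing, convex function $g:[0,\infty)\to[0,\infty)$ with $g(0)=0$ such that $\|a+b\|^2 \ge \|a\|^2 + 2\langle b, J_2(a)\rangle + g(\|b\|)$ for all $a,b\in B_{2r}(0)$. Taking $a=y_n$ and $b=x_n-y_n$ (both lie in $B_{2r}(0)$, and $a+b=x_n$) and inserting the identity $\langle y_n, J_2(y_n)\rangle=\|y_n\|^2$, a short rearrangement of the definition of $\phi$ gives
\[
\phi(x_n,y_n)=\|x_n\|^2-2\langle x_n-y_n, J_2(y_n)\rangle-\|y_n\|^2 \ \ge\ g(\|x_n-y_n\|).
\]
Finally I would conclude: since $\phi(x_n,y_n)\to 0$, the inequality forces $g(\|x_n-y_n\|)\to 0$. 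Because $g$ is strictly increasing and continuous with $g(0)=0$, we have $g(t)>0$ for $t>0$, so if $\|x_n-y_n\|$ failed to converge to $0$ there would be a subsequence bounded below by some $\epsilon_0>0$, yielding $g(\|x_{n_k}-y_{n_k}\|)\ge g(\epsilon_0)>0$, a contradiction. Hence $\|x_n-y_n\|\to 0$.

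The one point that needs care, and which I regard as the main (if modest) obstacle, is the bookkeeping of the radius: the difference $x_n-y_n$ can be as large as $2r$, so Xu's function $g$ must be extracted on $B_{2r}(0)$ rather than on the smaller ball containing the sequences themselves. The only other subtlety is arranging the algebra so that the substitution $\langle y_n, J_2(y_n)\rangle=\|y_n\|^2$ is made at exactly the right step, letting the right-hand side of Xu's inequality be recognized as $\phi(x_n,y_n)$.
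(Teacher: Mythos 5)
Your proposal is correct, but note that the paper itself never proves Lemma \ref{l10}: it is imported verbatim from Kamimura and Takahashi \cite{b10} as a known result, so there is no internal proof to compare against. What you have produced is a legitimate self-contained derivation from tools the paper does state: the two-sided bound $(\|x\|-\|y\|)^2\leq\phi(x,y)\leq(\|x\|+\|y\|)^2$ following Definition \ref{d2}, and Xu's inequality, Theorem \ref{t3}(i), with $p=2$. Each step checks out. The boundedness transfer is valid since $\bigl|\|x_n\|-\|y_n\|\bigr|\to 0$ plus boundedness of one sequence forces boundedness of the other; your algebraic identity is right, since
\[
\|x_n\|^2-2\left\langle x_n-y_n, J_2(y_n)\right\rangle-\|y_n\|^2
=\|x_n\|^2-2\left\langle x_n, J_2(y_n)\right\rangle+\|y_n\|^2=\phi(x_n,y_n),
\]
so applying Theorem \ref{t3}(i) with base point $y_n$ and increment $x_n-y_n$ indeed yields $\phi(x_n,y_n)\geq g(\|x_n-y_n\|)$; and the modulus inversion via subsequences is standard. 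Your care with the radius is also warranted: since Xu's $g$ depends on the ball, it must be extracted on $B_{2r}(0)$ so that the increment $x_n-y_n$ is admissible, and smoothness of $E$ ensures $J_2$ is single-valued so the choice $j_2(y_n)=J_2(y_n)$ is forced. This is essentially the same mechanism (uniform convexity expressed through a strictly increasing modulus vanishing at $0$) that underlies the original Kamimura--Takahashi argument, though they organize it around a two-point convexity inequality rather than Theorem \ref{t3}(i); the advantage of your route is that it stays entirely inside the toolkit this paper already quotes.
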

\begin{lemma}\label{h10}
Alber and Ryazantseva \cite{b2}, p. 17. If a functional $\phi$ on the open convex set $M \subset$ dom $\phi$ has a subdifferential,
then $\phi$ is convex and lower semicontinuous on the set.
\end{lemma}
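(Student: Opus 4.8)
The plan is to exploit the defining property of the subdifferential directly: to say that $\phi$ \emph{has a subdifferential} on $M$ means that $\partial\phi(x)\neq\emptyset$ for every $x\in M$, and for each such $x$ any chosen element $x^*\in\partial\phi(x)$ satisfies $\phi(y)-\phi(x)\geq\langle y-x, x^*\rangle$ for all $y\in M$. Equivalently, the continuous affine functional $\ell_x(y):=\phi(x)+\langle y-x, x^*\rangle$ is a minorant of $\phi$ on $M$ that agrees with $\phi$ at the point $x$. Since such an exact affine minorant exists at every point of $M$, the entire argument reduces to manipulating these inequalities.

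First I would establish convexity. Fix $x,y\in M$ and $t\in[0,1]$, and set $z:=tx+(1-t)y$, which lies in $M$ because $M$ is convex. Choosing $z^*\in\partial\phi(z)$, the subgradient inequality at $z$ gives $\phi(x)\geq\phi(z)+\langle x-z, z^*\rangle$ and $\phi(y)\geq\phi(z)+\langle y-z, z^*\rangle$. Forming the weighted combination $t\,(\text{first})+(1-t)\,(\text{second})$ and using $t(x-z)+(1-t)(y-z)=0$ to collapse the linear terms yields $t\phi(x)+(1-t)\phi(y)\geq\phi(z)=\phi\bigl(tx+(1-t)y\bigr)$, which is precisely convexity.

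Next I would prove lower semicontinuity, and the cleanest route is to observe that the affine minorants recover $\phi$ exactly. Since $\phi(y)\geq\ell_x(y)$ for every $x\in M$ while $\ell_y(y)=\phi(y)$, we obtain $\phi(y)=\sup_{x\in M}\ell_x(y)$ on $M$. Each $\ell_x$ is continuous, hence lower semicontinuous, and the pointwise supremum of an arbitrary family of lower semicontinuous functions is again lower semicontinuous; this gives the claim at once. Alternatively, one may argue directly: fixing $x_0\in M$ and $x_0^*\in\partial\phi(x_0)$, for any sequence $x_n\to x_0$ in $M$ one has $\phi(x_n)\geq\phi(x_0)+\langle x_n-x_0, x_0^*\rangle$, and letting $n\to\infty$ the right-hand side tends to $\phi(x_0)$ by continuity of $x_0^*$, so $\liminf_n\phi(x_n)\geq\phi(x_0)$.

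I do not expect a deep obstacle here; the content is essentially that a single exact affine minorant at each point already forces both convexity and lower semicontinuity. The only points needing care are bookkeeping ones: ensuring that the subgradient inequalities are invoked only for arguments lying in $M$ (legitimate since $M\subset\mathrm{dom}\,\phi$ is convex, so all convex combinations $z$ and the limit point $x_0$ remain in $M$), and noting that the lower semicontinuity so obtained is with respect to the norm topology, though once convexity is known it coincides with weak lower semicontinuity.
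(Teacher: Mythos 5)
Your proof is correct. Note, however, that the paper itself offers no proof of this statement: it is quoted verbatim as a known result from Alber and Ryazantseva's book (p.~17) in the Preliminaries section, so there is no internal argument to compare against. What you wrote is the standard, self-contained argument for this classical fact: the three-point subgradient inequality at the midpoint $z=tx+(1-t)y$ collapses the linear terms to give convexity, and lower semicontinuity follows either from representing $\phi$ as the pointwise supremum of its exact continuous affine minorants $\ell_x$ or from the direct $\liminf$ estimate at a fixed point $x_0$. One small remark: you only invoke the subgradient inequality for arguments $y\in M$, which is the weakest reading of ``has a subdifferential on $M$''; since the conclusion is also only asserted on $M$, this is exactly the right level of generality, and your bookkeeping (convexity of $M$ keeping $z$ and limit points inside $M$) is what makes it legitimate.
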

\begin{lemma}\label{h9}
 Rockafellar \cite{b5}. Let $E$ be a reflexive smooth Banach space and let $A$ be a monotone operator from $E$ to $E^*$. Then $A$ is maximal if and only if $R(J + rA) = E^*$ for all $r > 0$. That is, every maximal monotone map satisfies the range condition.
\end{lemma}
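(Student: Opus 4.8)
The statement is a biconditional, so the plan is to treat the two implications separately, after one normalisation. Since $rA$ is again monotone and maximal for every $r>0$, it suffices to prove $R(J+A)=E^{*}$ whenever $A$ is maximal monotone; applying this to $rA$ in place of $A$ then yields $R(J+rA)=E^{*}$ for all $r>0$. Conversely, the hypothesis $R(J+rA)=E^{*}$ is needed below only for a single value of $r$.

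For the direction \emph{range condition $\Rightarrow$ maximal}, suppose $R(J+A)=E^{*}$ but, seeking a contradiction, that $A$ is not maximal monotone. Then there is a pair $(x_{0},y_{0})\in E\times E^{*}$ not belonging to the graph of $A$ yet satisfying $\langle x-x_{0},\,Ax-y_{0}\rangle\ge 0$ for every $x\in D(A)$ (and every selection). Using surjectivity, choose $x_{1}\in D(A)$ and $w_{1}\in Ax_{1}$ with $Jx_{1}+w_{1}=Jx_{0}+y_{0}$, so that $w_{1}-y_{0}=-(Jx_{1}-Jx_{0})$. Substituting $x=x_{1}$ into the monotonicity inequality gives $0\le\langle x_{1}-x_{0},\,w_{1}-y_{0}\rangle=-\langle x_{1}-x_{0},\,Jx_{1}-Jx_{0}\rangle\le 0$, the last inequality being the monotonicity of the duality map $J$. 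Hence $\langle x_{1}-x_{0},\,Jx_{1}-Jx_{0}\rangle=0$, and the strict monotonicity of $J$ furnished by Theorem~\ref{t3}(ii) (with $g$ strictly increasing and $g(0)=0$) forces $x_{1}=x_{0}$; then $w_{1}=y_{0}$, so $(x_{0},y_{0})=(x_{1},w_{1})$ lies on the graph of $A$, a contradiction. This settles the easier implication.

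The substantive direction is \emph{maximal $\Rightarrow$ surjectivity of $J+A$}. Fix $f\in E^{*}$ and consider the translate $A_{f}:=A(\cdot)-f$, which is again maximal monotone; solving $f\in(J+A)(x)$ is equivalent to solving $0\in(J+A_{f})(x)$. The map $J$ is single-valued (by smoothness), demicontinuous, monotone with full domain $D(J)=E$, and coercive since $\langle x,Jx\rangle=\|x\|^{2}$. I would first invoke Rockafellar's sum theorem---applicable because $\operatorname{int}D(J)\cap D(A_{f})=D(A)\neq\emptyset$---to conclude that $T:=J+A_{f}$ is maximal monotone, and then verify coercivity of $T$ relative to a fixed pair $(x_{*},y_{*})$ on the graph of $A_{f}$: from $\langle x-x_{*},\,A_{f}x-y_{*}\rangle\ge 0$ and $\langle x-x_{*},\,Jx\rangle\ge\|x\|^{2}-\|x_{*}\|\,\|x\|$ one obtains $\langle x-x_{*},\,Tx\rangle/\|x\|\to\infty$ as $\|x\|\to\infty$. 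The surjectivity theorem for coercive maximal monotone operators on a reflexive space then gives $0\in R(T)$, i.e. $f\in R(J+A)$; since $f$ was arbitrary, $R(J+A)=E^{*}$.

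The main obstacle is precisely this surjectivity step. If one prefers not to quote the sum theorem and the Browder--Minty surjectivity theorem as black boxes, the alternative is a Galerkin construction: choose an increasing chain of finite-dimensional subspaces with dense union, solve the finite-dimensional coercive problems by a Brouwer-degree or fixed-point argument, extract (using reflexivity) a weakly convergent subsequence from the a~priori bounds supplied by coercivity, and pass to the limit by Minty's trick, in which the maximal monotonicity of $A$ is used to identify the weak limit as a genuine solution. Reconciling the weak convergence of the iterates with the convergence of their images under $J$ and $A$ is the delicate part; here I would rely on the demicontinuity of $J$ (single-valued and norm-to-weak continuous under smoothness) and on the weak--strong closedness of the graph of a maximal monotone operator to justify the limit passage.
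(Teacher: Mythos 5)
First, note that the paper does not prove Lemma \ref{h9} at all: it is imported as a known result of Rockafellar \cite{b5}, so the only meaningful comparison is with the standard literature proof (Rockafellar, Browder; see also Pascali and Sburlan \cite{b11}). Your outline follows that standard route, but as a proof it has two genuine gaps. In the direction ``range condition $\Rightarrow$ maximal'' you conclude $x_1=x_0$ from $\langle x_1-x_0,\,Jx_1-Jx_0\rangle=0$ by invoking Theorem \ref{t3}(ii). That theorem is stated for \emph{uniformly convex} spaces, whereas the lemma assumes only that $E$ is reflexive and smooth; strict monotonicity of $J$ is equivalent to strict convexity of $E$, which is not among the hypotheses, so this step is unavailable as written. (The defect is partly inherited from the paper's loose transcription of Rockafellar's theorem, which is correctly stated for reflexive $E$ with both $E$ and $E^*$ strictly convex -- attainable by renorming, though renorming changes $J$; in the paper's applications $E$ is uniformly convex and uniformly smooth, so nothing is lost there. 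Still, a proof of the lemma \emph{as stated} cannot lean on Theorem \ref{t3}.)

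The more serious problem is the substantive direction ``maximal $\Rightarrow R(J+A)=E^*$,'' which you do not actually prove: you reduce it to Rockafellar's sum theorem plus the surjectivity theorem for coercive maximal monotone operators. This reduction is circular in the standard development: the sum theorem -- which is exactly the content of the cited paper \cite{b5} -- and the coercive-surjectivity theorem are both ordinarily \emph{deduced from} the characterization $R(J+\lambda A)=E^*$ that you are trying to establish (for instance, coercive surjectivity is proved by solving $\lambda Jx_\lambda + Ax_\lambda \ni f$, bounding $\{x_\lambda\}$ by coercivity, and letting $\lambda \to 0$). You correctly flag that the non-circular route is the Galerkin/Debrunner--Flor construction combined with Minty's trick, but that is precisely where all the work of the theorem lies, and in your write-up it remains a one-sentence roadmap: the finite-dimensional reduction of the multivalued $A$, the Brouwer-degree step, the a priori bounds, and the limit passage are not carried out. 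So the proposal identifies the correct strategy but does not constitute a proof; given that the paper itself treats the lemma as a black-box citation, the honest options are either to cite it as the paper does or to execute the finite-dimensional argument in full.
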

\begin{lemma}\label{h4}
 Cioranescu \cite{b3}, p. 156. Let $A: X\rightarrow X^*$ be a semicontinuous monotone mapping with $D(A)=X$. Then $A$ is maximal monotone.
\end{lemma}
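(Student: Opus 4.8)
The plan is to verify maximality directly through the Minty trick. Recall that $A$ is maximal monotone precisely when its graph admits no proper monotone extension; equivalently, whenever a pair $(x_0, y_0) \in X \times X^*$ satisfies
\begin{equation}
\left\langle x - x_0, Ax - y_0 \right\rangle \geq 0 \quad \mbox{for all } x \in X, \label{proposal1}
\end{equation}
one necessarily has $Ax_0 = y_0$, so that $(x_0,y_0)$ already lies on the graph of $A$. Since $D(A) = X$, I am free to test (\ref{proposal1}) at every point of $X$, and it is exactly this everywhere-defined hypothesis that makes the argument go through.

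First I would fix an arbitrary direction $z \in X$ and a parameter $t > 0$, and substitute the point $x = x_0 + tz$ into (\ref{proposal1}). This yields
$$ t \left\langle z, A(x_0 + tz) - y_0 \right\rangle \geq 0, $$
and dividing by $t > 0$ gives $\left\langle z, A(x_0 + tz) - y_0 \right\rangle \geq 0$. Next I would let $t \to 0^+$ and invoke the semicontinuity of $A$ to pass the limit through the pairing, obtaining $\left\langle z, Ax_0 - y_0 \right\rangle \geq 0$. Since $z \in X$ was arbitrary, replacing $z$ by $-z$ forces $\left\langle z, Ax_0 - y_0 \right\rangle = 0$ for every $z \in X$; by the nondegeneracy of the duality pairing between $X$ and $X^*$ this means $Ax_0 - y_0 = 0$, i.e. $Ax_0 = y_0$, which completes the argument.

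The main obstacle, and the only place the topological hypothesis enters, is the limit passage $\left\langle z, A(x_0 + tz)\right\rangle \to \left\langle z, Ax_0 \right\rangle$ as $t \to 0^+$. Monotonicity alone delivers the one-sided inequality but cannot close the gap in the limit; it is precisely the semicontinuity (in the sense of hemicontinuity) of $A$ that guarantees the scalar function $t \mapsto \left\langle z, A(x_0 + tz)\right\rangle$ is continuous at $t = 0$, so that the inequality survives. I would take care to record in which sense \emph{semicontinuous} is meant, since continuity of $A$ from line segments in $X$ into $X^*$ equipped with the weak${}^{*}$ topology is exactly what is needed for the pairing to converge, and this is the hypothesis doing all the work.
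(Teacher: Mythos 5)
Your proof is correct: it is the classical Minty argument (test the extension inequality at $x_0 + tz$, divide by $t>0$, use the semicontinuity/hemicontinuity of $A$ along the segment to pass to the limit $t\to 0^+$, then exploit the arbitrariness of $z$ to conclude $Ax_0 = y_0$), which is exactly the standard proof of this result in the cited source. The paper itself gives no proof of this lemma --- it is quoted from Cioranescu --- so your argument matches the intended one and there is nothing further to compare.
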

\begin{lemma}\label{l15}
Chidume and Idu \cite{b28}. Let $E$ be an arbitrary real normed space and $E^*$ be its dual space. Let $A : E\rightarrow 2^{E^*}$ be any mapping. Then $A$ is monotone if and only if $T := (J-A) : E \rightarrow 2^{E^*}$ is $J$-pseudocontractive. 
\end{lemma}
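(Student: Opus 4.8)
The plan is to prove both implications at once, by exhibiting a single algebraic identity that couples a selection of $A$ to the corresponding selection of $T = J - A$, so that monotonicity of $A$ and $J$-pseudocontractivity of $T$ become two equivalent rephrasings of the sign of the same scalar. This is the Banach-space analogue of the elementary Hilbert-space fact that $A$ is monotone precisely when $I - A$ is pseudocontractive, with the identity map replaced by the normalized duality map $J$. First I would fix $x, y \in E$ and unwind the definitions in the set-valued setting: monotonicity of $A$ requires $\langle x - y,\, u - v \rangle \geq 0$ for every $u \in Ax$, $v \in Ay$, whereas $J$-pseudocontractivity of $T$ requires $\langle x - y,\, \xi - \eta \rangle \leq \langle x - y,\, j_x - j_y \rangle$ for the matching selections $\xi \in Tx$, $\eta \in Ty$ and $j_x \in Jx$, $j_y \in Jy$.

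The key step is to use the decomposition $T = J - A$ at the level of selections: every $\xi \in Tx$ is of the form $\xi = j_x - u$ with $j_x \in Jx$ and $u \in Ax$, and similarly $\eta = j_y - v$. Substituting and using linearity of the duality pairing in its $E^*$-argument yields
$$\langle x - y,\, u - v \rangle = \langle x - y,\, j_x - j_y \rangle - \langle x - y,\, \xi - \eta \rangle.$$
From this identity it is immediate that $\langle x - y,\, u - v \rangle \geq 0$ holds if and only if $\langle x - y,\, \xi - \eta \rangle \leq \langle x - y,\, j_x - j_y \rangle$; that is, the monotonicity inequality for the pair $(u, v)$ and the $J$-pseudocontractivity inequality for the pair $(\xi, \eta)$ are logically equivalent term by term.

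Finally I would handle the quantifiers so that the equivalence passes from the level of single matched pairs to the level of the maps. Since the assignment $(j_x, u) \mapsto \xi = j_x - u$ is a correspondence between pairs of selections of $(J, A)$ and selections of $T$ (for fixed $x$), the quantified statement defining monotonicity of $A$ transfers verbatim to the statement defining $J$-pseudocontractivity of $T$, and conversely. I expect the only genuine obstacle to be this bookkeeping of quantifiers in the multivalued formulation: one must check that the choices of $j_x, j_y \in J$ demanded by the definition of $J$-pseudocontractivity are exactly those supplied by the splitting $\xi = j_x - u$, so that no selection is lost in either direction. Once the definitions are read with compatible quantifiers, the whole proof collapses to the one-line identity displayed above.
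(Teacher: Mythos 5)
Your proof is correct. Note that the paper itself contains no proof of this lemma: it is quoted as a preliminary result from Chidume and Idu \cite{b28}. Your argument is essentially the original one from that source: decomposing each selection $\xi \in Tx$ as $\xi = j_x - u$ with $j_x \in Jx$, $u \in Ax$ (and $\eta = j_y - v$ likewise), the identity $\langle x-y,\, u-v\rangle = \langle x-y,\, j_x - j_y\rangle - \langle x-y,\, \xi - \eta\rangle$ makes the monotonicity inequality and the $J$-pseudocontractivity inequality equivalent pair by pair; and your closing caveat about reading the definition of $J$-pseudocontractivity with selections of $Jx, Jy$ compatible with the splitting (which matters since $J$ may be multivalued in an arbitrary normed space) is exactly the point on which the equivalence turns.
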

\begin{lemma}\label{l25}
(See, e.g., Chidume and  Djitte \cite{b6}).  Let $X$ and $Y$ be real normed linear spaces and let  $T : X \rightarrow Y$ be a uniformly continuous map. For arbitrary $r > 0$ and fixed $x^* \in X$, let $$B_X(x^*, r) : \left\{x \in X : {\|	x - x^*\|}_X \leq r\right\}.$$ 
Then $T\left(B(x^*, r)\right)$ is bounded.
\end{lemma}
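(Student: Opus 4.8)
The plan is to exploit uniform continuity directly via a chaining argument along straight line segments, which is legitimate precisely because $X$ is a normed linear space and convex combinations make sense. First I would fix $\epsilon = 1$ and invoke the uniform continuity of $T$ to produce $\delta > 0$ such that, for all $u, v \in X$, one has $\|u - v\|_X < \delta \Rightarrow \|Tu - Tv\|_Y < 1$. The decisive point is that this $\delta$ is chosen once and for all from the single value $\epsilon = 1$, so it does not depend on any particular point of the ball.

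Next, given an arbitrary $x \in B_X(x^*, r)$, I would join the centre $x^*$ to $x$ by the segment $x_k = x^* + \frac{k}{N}(x - x^*)$, $k = 0, 1, \dots, N$, where $N$ is a fixed integer satisfying $N > r/\delta$ (for instance $N = \lfloor r/\delta\rfloor + 1$). Each consecutive gap then obeys $\|x_{k+1} - x_k\|_X = \frac{1}{N}\|x - x^*\|_X \leq \frac{r}{N} < \delta$, so the choice of $\delta$ yields $\|Tx_{k+1} - Tx_k\|_Y < 1$ for every $k$. A telescoping application of the triangle inequality now gives
$$
\|Tx - Tx^*\|_Y \;\leq\; \sum_{k=0}^{N-1}\|Tx_{k+1} - Tx_k\|_Y \;<\; N,
$$
and hence $\|Tx\|_Y \leq \|Tx^*\|_Y + N$ for every $x$ in the ball.

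Since $N$ is determined solely by $r$ and $\delta$, the resulting estimate is uniform over all of $B_X(x^*, r)$, and the boundedness of $T\bigl(B_X(x^*, r)\bigr)$ follows at once. The only step that genuinely requires care is the uniform choice of the number of subdivisions $N$: the essential observation is that every $x$ in the closed ball lies within distance $r$ of the single fixed centre $x^*$, so one and the same $N > r/\delta$ serves simultaneously for all such $x$. I expect no real obstacle here, as the argument is elementary; the one subtlety to avoid is letting $N$ depend on $x$, which would silently destroy the uniformity of the bound.
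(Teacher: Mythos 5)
Your proof is correct: the chaining argument along the segment from $x^*$ to $x$, with a single $\delta$ obtained from $\epsilon = 1$ and a number of subdivisions $N > r/\delta$ fixed independently of $x$, gives the uniform bound $\|Tx\|_Y \leq \|Tx^*\|_Y + N$ on the whole ball, and you correctly flag the one point that matters (that $N$ must not depend on $x$). Note that the paper itself offers no proof of this lemma --- it is quoted as a known result from Chidume and Djitte \cite{b6} --- so there is nothing internal to compare against; your argument is the standard proof of this fact and is essentially the one found in that reference.
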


\section{Main Results}
We first give give some definitions and prove the lammas which are useful in establishing our main results.
\begin{definition}\label{d1}
Let $E$ be a smooth real Banach space with the dual $E^*$.
\begin{itemize}
\item[(i)] We introduce the function ${\phi}_p : E \times E \rightarrow \R$ defined by
$${\phi}_p (x,y)=  \frac{p}{q}{\|x \|}^q - p\left\langle x, Jy \right\rangle + {\|y \|}^p , \ \mbox{for all} \ x,y \in E,$$
where $J$ is the generalized duality map from $E$ to $E^*$, $p$ and $q$ are real numbers such that $q\geq p>1$ and $\frac{1}{p} + \frac{1}{q} =1$.
\item[(ii)] We introduce the function $V_p: E\times E^* \rightarrow \R $ defined as 
$$V_p (x,x^*)= \frac{p}{q}{\|x \|}^q - p\left\langle x, x^* \right\rangle + {\|x^* \|}^p  ~~ \forall ~~ x \in E, x^*\in E^* \ \mbox{such that} \ q\geq p>1,\ \ \frac{1}{p} + \frac{1}{q} =1.$$
\end{itemize}	
\end{definition}

\begin{remark}\label{r1}
These remarks follow from Definition \ref{d1}:
\begin{itemize}
\item[(i)]  For $p = 2, {\phi}_2 (x,y)= {\phi} (x,y)$. Also, it is obvious from the definition of the function ${\phi}_p$ that
\begin{equation}\label{e11}
(\|x\|- \|y \|)^p \leq {\phi}_p (x,y) \leq (\|x\| + \|y \|)^p \ \mbox{for all} \ x,y \in E.
\end{equation}
	\item [(ii)] It is obvious that
\begin{equation}\label{e20}
V_p (x,x^*)= {\phi}_p (x,J^{-1}x^*)~~ \forall ~~ x \in E, x^*\in E^*.
\end{equation}
\end{itemize}
\end{remark}

\begin{lemma}\label{l8}
Let $E$ be a smooth uniformly convex real Banach space. For $d > 0$, let $ B_d(0):= \left\{ x \in E: \| x \| \leq d \right\} $. Then for arbitrary $x, y \in B_d(0)$,
$$ {\|x-y \|}^p \geq{\phi}_p (x,y)- \frac{p}{q}{\| x\|}^q,~~ q\geq p>1,~~ \frac{1}{p}+\frac{1}{q}=1.$$
\end{lemma}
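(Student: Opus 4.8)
The plan is to unwind the definition of $\phi_p$ so that the two copies of $\tfrac{p}{q}\|x\|^q$ cancel, thereby reducing the claim to an inequality that is exactly the content of Xu's Theorem \ref{t3}(i) applied at a well-chosen pair of points. Substituting
\[
\phi_p(x,y) = \frac{p}{q}{\|x\|}^q - p\langle x, Jy\rangle + {\|y\|}^p
\]
into the right-hand side of the asserted inequality, the $\tfrac{p}{q}\|x\|^q$ terms cancel and the statement to be proved becomes
\[
\|x - y\|^p \geq \|y\|^p - p\langle x, Jy\rangle .
\]

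First I would invoke Theorem \ref{t3}(i) with radius $r = d$, taking its first argument to be $y$ and its second argument (the increment) to be $-x$. Since $x, y \in B_d(0)$ we also have $-x \in B_d(0)$, so the hypotheses are met. Because $E$ is smooth, the duality map $J = J_p$ is single-valued, hence $j_p(y) = Jy$, and the inequality reads
\[
\|y + (-x)\|^p \geq \|y\|^p + p\langle -x, Jy\rangle + g(\|-x\|) = \|y\|^p - p\langle x, Jy\rangle + g(\|x\|),
\]
where $g$ is the continuous, strictly increasing, convex function furnished by the theorem with $g(0)=0$.

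Next I would simply discard the last term: since $g$ is strictly increasing with $g(0)=0$, we have $g(t)\geq 0$ for all $t \geq 0$, in particular $g(\|x\|) \geq 0$. Using also $\|y - x\| = \|x - y\|$, this gives
\[
\|x - y\|^p \geq \|y\|^p - p\langle x, Jy\rangle = \phi_p(x,y) - \frac{p}{q}\|x\|^q,
\]
which is precisely the desired conclusion.

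There is no serious analytic obstacle here; the whole argument rests on the single inequality of Theorem \ref{t3}(i) together with the nonnegativity of $g$. The only points demanding care are choosing the correct instantiation of Xu's inequality — expanding about $y$ with increment $-x$ rather than the reverse, so that the sign in front of $\langle x, Jy\rangle$ comes out right — and observing that smoothness of $E$ makes $J$ single-valued, allowing $j_p(y)$ to be written unambiguously as $Jy$. The cancellation of the $\tfrac{p}{q}\|x\|^q$ terms is what makes the statement align cleanly with the known inequality.
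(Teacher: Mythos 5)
Your proof is correct and follows essentially the same route as the paper's: both rest entirely on Xu's inequality (Theorem \ref{t3}(i)) instantiated with expansion point $y$ and increment $-x$, followed by discarding the nonnegative term $g(\|x\|)$. The only difference is presentational — the paper arrives at that instantiation by substituting $y \mapsto -y$ and then interchanging $x$ and $y$, and adds and subtracts $\tfrac{p}{q}\|x\|^q$ rather than cancelling it as you do.
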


\begin{proof}
 Since $E$ is a uniformly convex space, then by Theorem \ref{t3}, we have for arbitrary $x,y \in B_d(0)$,
 $$ {\|x + y \|}^p \geq {\|x \|}^p + p\left\langle y, Jx \right\rangle + g(\|y \|).$$ 
 Replacing $y$ by $-y$ gives
 $$ {\|x - y \|}^p \geq {\|x \|}^p - p\left\langle y, Jx \right\rangle + g(\|y \|).$$ 
Interchanging $x$ and $y$, we have 
\begin{eqnarray*}
 {\|x - y \|}^p & \geq & {\|y \|}^p - p\left\langle x, Jy \right\rangle + g(\|x \|)\\
 &\geq & \frac{p}{q}{\| x\|}^q - p\left\langle x, Jy \right\rangle + {\|y \|}^p-\frac{p}{q}{\| x\|}^q + g(\|x \|)\\
 &\geq & {\phi}_p (x,y)- \frac{p}{q}{\| x\|}^q + g(\|x \|)\\
  &\geq & {\phi}_p (x,y)- \frac{p}{q}{\| x\|}^q.
 \end{eqnarray*}   
\end{proof}

\begin{lemma}\label{l20}
 Let $E$ be a smooth uniformly convex real Banach space with $E^*$ as its dual. Then,
\begin{equation}\label{e21}
V_p (x, x^*) + p\left\langle J^{-1}x^*-x, y^* \right\rangle \leq V_p(x, x^*+y^*)
\end{equation}
for all $x\in E$ and $x^*, y^* \in E^*$.
\end{lemma}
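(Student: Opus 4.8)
The plan is to follow Alber's convexity argument, reducing the two‑variable inequality (\ref{e21}) to a one‑variable subgradient inequality for a power of the norm on the dual space. First I would substitute the definition of $V_p$ into both sides and subtract. The term $\frac{p}{q}\|x\|^q$ does not involve $x^*$ and cancels, while the pairing terms combine, giving
\begin{equation*}
V_p(x, x^*+y^*) - V_p(x, x^*) = \|x^*+y^*\|^p - \|x^*\|^p - p\left\langle x, y^* \right\rangle .
\end{equation*}
Since $p\left\langle J^{-1}x^*-x, y^*\right\rangle = p\left\langle J^{-1}x^*, y^*\right\rangle - p\left\langle x, y^*\right\rangle$, the term $-p\left\langle x, y^*\right\rangle$ cancels from both sides, and (\ref{e21}) becomes equivalent to the single inequality
\begin{equation*}
\|x^*+y^*\|^p \geq \|x^*\|^p + p\left\langle J^{-1}x^*, y^* \right\rangle .
\end{equation*}

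Next I would recognise this as a convexity (subgradient) inequality for the functional $f(u):=\|u\|^p$ on $E^*$. Because $E$ is smooth and uniformly convex it is reflexive and strictly convex, so $E^*$ is itself smooth; hence the dual norm, and therefore $f$, is G\^{a}teaux differentiable off the origin, and by Lemma \ref{h10} $f$ is convex and lower semicontinuous. The point is that the G\^{a}teaux derivative of $f$ at $x^*$, read through the reflexive identification $E^{**}\cong E$, is $p\,J^{-1}x^*$: this uses the relation $J_p^{-1}=J_q^{*}$ recorded in the introduction, i.e. that the duality map realising the gradient of the appropriate power of the dual norm is exactly $J^{-1}$. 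The standard convexity inequality $f(x^*+y^*)\geq f(x^*)+\left\langle f'(x^*), y^*\right\rangle$ is then precisely the displayed estimate, which finishes the proof.

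Alternatively, and staying closer to the results already quoted, one may invoke Theorem \ref{t3}(i) in the space $E^*$ (legitimate once $E^*$ is uniformly convex, as in the paper's main setting where $E$ is also uniformly smooth): applied to $u=x^*$, $v=y^*$ it yields a continuous strictly increasing $g$ with $g(0)=0$ satisfying $\|x^*+y^*\|^p \geq \|x^*\|^p + p\left\langle y^*, j_p(x^*)\right\rangle + g(\|y^*\|)$, with $j_p(x^*)$ identified with $J^{-1}x^*$; discarding the nonnegative term $g(\|y^*\|)$ gives the required inequality.

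The step I expect to be the main obstacle is the bookkeeping in the second paragraph: pinning down the exact duality map that realises the gradient of the dual‑norm power, keeping the conjugate exponents $p$ and $q$ in their correct places, and being careful about the direction of the pairing $\left\langle J^{-1}x^*, y^*\right\rangle$ (an $E$–$E^*$ pairing interpreted through reflexivity). Everything else is the algebraic cancellation of the first paragraph together with the elementary convexity inequality, so essentially the whole difficulty is concentrated in correctly identifying $f'(x^*)$ with $p\,J^{-1}x^*$.
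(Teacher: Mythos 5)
Your proposal is correct and essentially coincides with the paper's proof: after the same algebraic cancellation, the paper applies Theorem \ref{t3}(i) in the dual space $E^*$ with $j_p(x^*)$ identified as $J^{-1}x^*$ and discards the nonnegative term $g(\|y^*\|)$ --- which is precisely your ``alternative'' route. Your primary subgradient route is only a cosmetic variant (it is the same device the paper uses to prove Lemma \ref{l31} via Lemma \ref{h10}), and both arguments rest on the identification of $J^{-1}x^*$ with the appropriate duality element of $E^*$ that you flag as the main obstacle --- a point the paper itself asserts without comment.
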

\begin{proof}
\begin{eqnarray}
V_p (x,x^*)& =&  \frac{p}{q}{\|x \|}^q  -  p\left\langle x, x^* \right\rangle + {\|x^* \|}^p, \nonumber\\
V_p (x,x^*+y^*) &=&  \frac{p}{q}{\|x \|}^q - p\left\langle x, x^*+y^* \right\rangle + {\|x^*+y^* \|}^p. \nonumber \\
V_p (x, x^*+y^*) - V_p (x,x^*) &=& - p\left\langle x, y^* \right\rangle + {\|x^*+y^* \|}^p-{\|x^* \|}^p \nonumber\\
&\geq& p\left\langle -x, y^* \right\rangle+ {\|x^* \|}^p+ p\left\langle y^*, J^{-1}x^*\right\rangle+g(\|y^* \|)-{\|x^* \|}^p\nonumber\\
&& (\ \mbox{by Theorem \ref{t3}})\nonumber\\
&\geq& p\left\langle J^{-1}x^*-x^*, y^* \right\rangle,
\end{eqnarray}
so that
$$V_p (x,x^*)+ p\left\langle J^{-1}x^*-x^*, y^* \right\rangle \leq  V_p (x, x^*+y^*).$$
\end{proof}
\begin{lemma}\label{l31}
 Let $E$ be a reflexive strictly convex and smooth real Banach space with the dual $E^*$. Then,
\begin{equation}\label{e31}
{\phi}_p(y,x)-{\phi}_p(y,z)\geq  p\left\langle z-y, Jx-Jz\right\rangle= p\left\langle y-z, Jz-Jx\right\rangle  \ \mbox{ for all} \ x, y, z\in E.
\end{equation}
\end{lemma}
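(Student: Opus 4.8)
The plan is to reduce the claimed inequality to a single scalar estimate, in exact analogy with Alber's three-point identity for the functional $\phi$. First I would expand both sides directly from Definition \ref{d1}. Writing out $\phi_p(y,x)$ and $\phi_p(y,z)$, the common $\frac{p}{q}\|y\|^q$ terms cancel and leave
$$\phi_p(y,x)-\phi_p(y,z)=\|x\|^p-\|z\|^p+p\left\langle y, Jz-Jx\right\rangle.$$
On the other side I would expand $p\left\langle z-y, Jx-Jz\right\rangle$ and use the defining property of the generalized duality map $\left\langle z, Jz\right\rangle=\|z\|^p$; the terms containing $y$ then cancel against those coming from the left-hand side. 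Using $\frac{p}{q}=p-1$, the asserted inequality becomes equivalent to the homogeneous scalar estimate
$$\|x\|^p+(p-1)\|z\|^p-p\left\langle z, Jx\right\rangle\geq 0.$$
I would try to close this using the duality bound $\left\langle z, Jx\right\rangle\leq\|z\|\,\|Jx\|=\|z\|\,\|x\|^{p-1}$ followed by Young's inequality for the conjugate pair $(p,q)$, or equivalently by invoking Theorem \ref{t3}(i) with base point $x$ and increment $z-x$, which bounds $p\left\langle z, Jx\right\rangle$ from above and leaves the nonnegative surplus $g(\|z-x\|)$.

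An alternative and structurally cleaner route is to deduce the statement from Lemma \ref{l20}, of which this lemma is the $\phi_p$-companion. Setting $x:=y$, $x^*:=Jz$ and $y^*:=Jx-Jz$ in Lemma \ref{l20}, and using $J^{-1}(Jz)=z$, that lemma reads
$$V_p(y,Jz)+p\left\langle z-y, Jx-Jz\right\rangle\leq V_p(y,Jx).$$
Converting each $V_p(y,J\cdot)$ back into $\phi_p(y,\cdot)$ by Remark \ref{r1}(ii) would then give the assertion immediately. This route only needs $J^{-1}$ to be well defined and single-valued, which is exactly what the standing hypotheses (reflexive, strictly convex and smooth) provide, so it fits the weakened assumptions of the present statement relative to Lemma \ref{l20}.

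The step I expect to be the main obstacle is the final scalar inequality $\|x\|^p+(p-1)\|z\|^p-p\left\langle z, Jx\right\rangle\geq0$: this quantity is precisely the Young/convexity gap for the conjugate exponents, and keeping its sign correct for general $q\geq p>1$ — rather than only for $p=2$, where $\phi_p$ collapses to $\|x-z\|^2$ and the three-point identity is exact — is delicate. I would therefore take care to isolate the genuinely nonnegative contribution $\phi_p(z,x)\geq(\|z\|-\|x\|)^p\geq0$ (from Remark \ref{r1}(i)) from the homogeneity mismatch between the $\|z\|^q$ and $\|z\|^p$ terms, and to check that the correspondence in Remark \ref{r1}(ii) is applied with matching exponents, since that is exactly where the powers of $\|J^{-1}x^*\|$ versus $\|x^*\|$ must be reconciled.
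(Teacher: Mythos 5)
Your second route is, in substance, the paper's own proof. The paper does not cite Lemma \ref{l20}; instead it re-derives it: using Theorem \ref{t3} (applied to vectors of $E^*$) and Lemma \ref{h10}, it asserts that $V_p(y,\cdot)$ is convex with gradient $p(J^{-1}z^*-y)$ at $z^*$, writes the subgradient inequality $V_p(y,x^*)-V_p(y,z^*)\geq p\left\langle J^{-1}z^*-y,\, x^*-z^*\right\rangle$, which is exactly Lemma \ref{l20} relabeled, and then substitutes $x^*=Jx$, $z^*=Jz$ --- precisely your substitution $x:=y$, $x^*:=Jz$, $y^*:=Jx-Jz$. So that route is the same mechanism, only streamlined, and the conversion via Remark \ref{r1}(ii) is indeed harmless. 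Be careful with one point, though: you cannot invoke Lemma \ref{l20} here merely because $J^{-1}$ is single-valued. Lemma \ref{l20} is stated and proved (via Theorem \ref{t3}) for uniformly convex $E$, whereas Lemma \ref{l31} assumes only reflexivity, strict convexity and smoothness; the paper's own proof has the same defect, since it also uses the function $g$ of Theorem \ref{t3}, which is unavailable under the stated hypotheses.

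Your first route --- the announced main plan --- has a genuine gap, and it is exactly the one you flagged; it cannot be closed. The reduction is correct: every term containing $y$ cancels, and the claimed inequality is equivalent to $\|x\|^p+(p-1)\|z\|^p\geq p\left\langle z, Jx\right\rangle$ for all $x,z\in E$. But Young's inequality applied to $p\left\langle z,Jx\right\rangle\leq p\|z\|\,\|x\|^{p-1}$ gives $p\|z\|\,\|x\|^{p-1}\leq \|z\|^p+(p-1)\|x\|^p$, i.e.\ the inequality with $x$ and $z$ in transposed positions, and the transposition is fatal: the inequality you actually need is false whenever $p\neq 2$. In a Hilbert space with $J=J_p$ (so $J_px=\|x\|^{p-2}x$), take $\|x\|=1$ and $z=\lambda x$; the required inequality becomes $h(\lambda):=1+(p-1)\lambda^p-p\lambda\geq 0$, but $h(1)=0$ and $h'(1)=p(p-2)<0$ for $1<p<2$ (recall that the standing constraint $q\geq p$ forces $p\leq 2$), so $h(\lambda)<0$ just above $\lambda=1$; concretely, $p=3/2$ and $\lambda=4$ give $1+4-6=-1$. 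Thus the asserted inequality holds only for $p=2$, where $\phi_2=\phi$ and the classical three-point identity applies. Your direct expansion is the more honest computation precisely because it exposes this: the root error, shared by Lemma \ref{l20} and the paper's proof of Lemma \ref{l31}, is that the subgradient of $\|\cdot\|^p$ on $E^*$ is $p$ times the duality map of $E^*$ with gauge $s^{p-1}$, not $pJ^{-1}=pJ^*_q$ (gauge $s^{q-1}$); the two coincide only when $p=q=2$.
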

\begin{proof}
Consider the functional $V_p: E\times E^*\rightarrow \R$ with respect to the variable $y^*$ and a fixed element $y$ defined as
$$V_p (y,y^*)=\frac{p}{q}{\| y\|}^q - p\left\langle y, y^* \right\rangle + {\|y^* \|}^p \ \mbox{ for all} \ y \in E,  y^*\in E^*.$$
We first show that $V_p$ has a subdifferential on open subset $M\subset $ dom $V_p$. For every $h\in E^*$ and $t\in {\R}\backslash \left\{0 \right\}$, we have,
\begin{eqnarray*}
V_p (y,y^*)&=&\frac{p}{q}{\| y\|}^q - p\left\langle y, y^* \right\rangle + {\|y^* \|}^p,\\
V_p (y,y^*+th)&=&\frac{p}{q}{\| y\|}^q - p\left\langle y, y^*+th \right\rangle + {\|y^*+th \|}^p\\
&\geq& \frac{p}{q}{\|y \|}^q -p\left\langle y, y^*\right\rangle-pt\left\langle y, h \right\rangle+ {\| y^*\|}^p+pt\left\langle J^{-1}y^*, h\right\rangle + g(\| th\|), \\
\mbox{then} \ \displaystyle \lim_{t\rightarrow 0}\frac{V_p (y,y^*+th)-V_p (y,y^*)}{t} &\geq& p\left\langle J^{-1}y^*-y, h\right\rangle.
\end{eqnarray*}
Therefore, grad $V_p (x,y)=p(J^{-1}y^*-y)$ and by the Lemma \ref{h10}, $V_p$ is convex and lower semicontinuous. Then it follows from the definition of subdifferential that
$$V_p (y,x^*)-V_p(y,z^*)\geq p\left\langle J^{-1}z^*-y, x^*-z^*\right\rangle  \ \mbox{ for all} \ y \in E, x^*, z^* \in E^*.$$
Since ${\phi}_p (y,x)=V_p (y,J^{-1}x^*)$, we have
$${\phi}_p(y,x)-{\phi}_p(y,z)\geq p\left\langle z-y, Jx-Jz\right\rangle  \ \mbox{ for all} \ x, y, z\in E.$$
\end{proof}

\begin{theorem}\label{t6}
Let $E$ be a uniformly smooth and uniformly convex real Banach space and $E^*$ be its dual space. Suppose $A : E\rightarrow  E^*$ is bounded, $\eta$-strongly monotone and satisfies the range condition such that $A^{-1}(0)\neq \emptyset$. Let  $\left\{ {\lambda}_n\right\}$ and  $\left\{ {\theta}_n\right\}$ be real sequences in $(0,1)$ such that,
\begin{itemize}
	\item[(i)] $\lim {\theta}_n =0$ and $\left\{ {\theta}_n\right\}$ is decreasing;
	\item[(ii)]$ \displaystyle\sum_{n=1}^{\infty} {\lambda}_n {\theta}_n=\infty$;
	\item[(iii)]$\displaystyle \lim_{n\rightarrow \infty}\left(({\theta}_{n-1}/{\theta}_n)-1\right)/{\lambda}_n {\theta}_n=0,~~ \displaystyle\sum_{n=1}^{\infty} {\lambda}^2_n <\infty$.
\end{itemize}		
 For arbitrary $x_1 \in E$, define $\left\{x_n\right\}$ iteratively by:
\begin{equation}
x_{n+1} = J^{-1}\left(Jx_n - {\lambda}_n\left(Ax_n+{\theta}_n(Jx_n-Jx_1)\right)\right), n \in \N,
\end{equation}
where $J$ is the generalized duality mapping from $E$ into $E^*$.
There exists a real constant ${\epsilon}_0>0$ such that ${\psi}({\lambda}_nM_0)\leq{\epsilon}_0, \ \ n \in \N$ for some constant $M_0>0$. Then, the sequence $\left\{x_n \right\}$ converges strongly to the solution of $Ax=0$. 
\end{theorem}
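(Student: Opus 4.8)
The plan is to combine a Tikhonov-type regularization with the Lyapunov machinery of Lemmas \ref{l8}, \ref{l20} and \ref{l31}, and then to force a recursion of exactly the shape required by Xu's Lemma \ref{l11}. First I would record the structural facts that make the scheme meaningful. Since $A$ is $\eta$-strongly monotone, $A^{-1}(0)$ contains at most one point, so let $x^\ast$ be the unique solution of $Ax=0$. Because $E$ is uniformly smooth and uniformly convex, $J$ is a bijection of $E$ onto $E^\ast$ whose inverse is single-valued and uniformly continuous on bounded sets (Lemmas \ref{l13} and \ref{l12}); this makes $x_{n+1}=J^{-1}(\cdots)$ well defined and produces the modulus $\psi$ in the hypothesis. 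I would then introduce the regularized points $y_n\in E$ defined as the unique solutions of
\begin{equation*}
Ay_n+\theta_n\big(Jy_n-Jx_1\big)=0,\qquad\text{equivalently}\qquad \Big(J+\tfrac{1}{\theta_n}A\Big)y_n=Jx_1 .
\end{equation*}
These exist by the range condition (with $t=1/\theta_n>0$) and are unique by strong monotonicity, so $y_n=(J+t_nA)^{-1}Jx_1$ with $t_n=1/\theta_n\to\infty$. Since the range condition makes $A$ maximal monotone (Lemma \ref{h9}) and $E^\ast$ is strictly convex with Fr\'echet differentiable norm, Kido's Theorem \ref{t8} gives $y_n\to x^\ast$; in particular $\{y_n\}$ is bounded. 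It therefore suffices to prove $\|x_n-y_n\|\to0$, and by the bounds in Remark \ref{r1}(i) together with Lemma \ref{l10} it is enough to prove $\phi_p(y_n,x_n)\to0$.

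The heart of the argument is the recursion for $\phi_p(y_n,x_n)$. Writing $w_n=Ax_n+\theta_n(Jx_n-Jx_1)$, so that $Jx_{n+1}=Jx_n-\lambda_n w_n$, I would apply Lemma \ref{l31} with $y=y_n$, $x=x_n$, $z=x_{n+1}$ to obtain
\begin{equation*}
\phi_p(y_n,x_{n+1})\le\phi_p(y_n,x_n)-p\lambda_n\big\langle x_{n+1}-y_n,\;w_n\big\rangle .
\end{equation*}
Splitting $x_{n+1}-y_n=(x_{n+1}-x_n)+(x_n-y_n)$ and using $Ay_n=-\theta_n(Jy_n-Jx_1)$ to rewrite $w_n=(Ax_n-Ay_n)+\theta_n(Jx_n-Jy_n)$, the $\eta$-strong monotonicity of $A$ and Theorem \ref{t3}(ii) for $J$ give $\langle x_n-y_n,w_n\rangle\ge\eta\|x_n-y_n\|^p+\theta_n g(\|x_n-y_n\|)\ge0$, the dissipative term driving the decay. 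The remaining inner product $\langle x_{n+1}-x_n,w_n\rangle$ is an error of order $\lambda_n$: because $A$ is bounded and $\{x_n\}$ will be shown bounded, $\|w_n\|\le M_0$, and the hypothesis $\psi(\lambda_nM_0)\le\epsilon_0$ with the uniform continuity of $J^{-1}$ controls $\|x_{n+1}-x_n\|$, so this term is absorbed into a summable $\gamma_n$ via $\sum\lambda_n^2<\infty$.

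Finally I would account for the drift of the reference point by writing $\phi_p(y_{n+1},x_{n+1})=\phi_p(y_n,x_{n+1})+[\phi_p(y_{n+1},x_{n+1})-\phi_p(y_n,x_{n+1})]$ and estimating the bracket by $\|y_{n+1}-y_n\|$ times a bounded factor; a resolvent estimate based on strong monotonicity bounds $\|y_{n+1}-y_n\|$ by a constant multiple of $\big|(\theta_{n-1}/\theta_n)-1\big|$, which by condition (iii) is $o(\lambda_n\theta_n)$. Collecting everything yields
\begin{equation*}
\phi_p(y_{n+1},x_{n+1})\le(1-\alpha_n)\,\phi_p(y_n,x_n)+\alpha_n\sigma_n+\gamma_n,
\end{equation*}
with $\alpha_n$ comparable to $\lambda_n\theta_n$, so $\sum\alpha_n=\infty$ by condition (ii), $\limsup\sigma_n\le0$ (the regularization error being $O(\theta_n)\to0$ by condition (i)), and $\sum\gamma_n<\infty$. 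Lemma \ref{l11} then forces $\phi_p(y_n,x_n)\to0$, and hence $x_n\to x^\ast$.

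Two points deserve emphasis. The boundedness of $\{x_n\}$, glossed over above, should be proved first by induction on the Lyapunov functional: fixing a ball containing $x^\ast$ and the $y_n$, one uses $\psi(\lambda_nM_0)\le\epsilon_0$ to keep $\|x_{n+1}-x_n\|$ small and shows the functional cannot leave a fixed level set. The genuinely delicate step, and the one I expect to be the main obstacle, is the bookkeeping in the displayed recursion: one must simultaneously (a) convert the dissipative term $\eta\|x_n-y_n\|^p+\theta_n g(\|x_n-y_n\|)$ into a decay factor $\alpha_n\phi_p(y_n,x_n)$ with $\sum\alpha_n=\infty$, which is where the comparison between $\phi_p$ and the norm on bounded sets (via Lemma \ref{l8} and Remark \ref{r1}(i)) must be used with care, and (b) show that the reference-point drift and the forward-step error are respectively $o(\alpha_n)$ and summable. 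The three growth conditions on $\{\lambda_n\}$ and $\{\theta_n\}$ are calibrated precisely so that these competing quantities fit the template of Lemma \ref{l11}, and verifying this balance is the crux of the proof.
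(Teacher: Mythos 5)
Your proposal follows essentially the same route as the paper's own proof: the same regularization path $y_n=(J+\tfrac{1}{\theta_n}A)^{-1}Jx_1$ justified by the range condition and Kido's Theorem \ref{t8}, the same Lyapunov recursion for $\phi_p(y_n,x_n)$ obtained from strong monotonicity together with Theorem \ref{t3} and Lemma \ref{l8}, the same resolvent drift estimate controlled by condition (iii), the same conclusion via Lemmas \ref{l11} and \ref{l10}, and the same induction on $\phi_p(x^*,x_n)$ for boundedness. The only cosmetic deviations are that you derive the basic inequality directly from Lemma \ref{l31} where the paper routes it through Lemma \ref{l20}, and that you book the forward-step error as a summable $\gamma_n$ rather than the paper's $p\lambda_n\epsilon_0 M_0$ term; neither changes the structure of the argument.
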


\begin{proof}
  Let $x^*\in E$ be a solution of the equation $Ax=0$. There exists $r > 0$ sufficiently large such that:
\begin{equation}\label{e22}
r\geq \max \left\{ {\phi}_p( x^*, x_1), \frac{4p}{q}{\| x^*\|}^q \right\}.
\end{equation}
 We divide the proof into two parts.\\
{\bf Part 1:} We prove that $\left\{x_n \right\}$ is bounded. It suffices to show that ${\phi}_p(x^*, x_n)\leq r, \forall ~~ n\in \N.$ The proof is by induction. By construction, ${\phi}_p(x^*, x_1)\leq r$.
 Suppose that ${\phi}_p(x^*, x_n)\leq r$ for some $n\in \N$. We show that ${\phi}_p(x^*, x_{n+1})\leq r.$ \\
 From inequality (\ref{e11}), for real $p>1$, we have $\|x_n \| \leq r^{\frac{1}{p}} + \|x^* \|$. Since $A$ is bounded and by Lemma \ref{l13}, $J$ is uniformly continuous on bounded subsets of $E$, we define
\begin{equation}\label{e23}
M_0 :=\sup \left\{ {\|Ax_n+{\theta}_n(Jx_n-Jx_1)\|}: {\theta}_n\in(0,1), \|x_n \| \leq r^{\frac{1}{p}} + \|x^* \|  \right\}+1.
\end{equation}
Let ${\psi}$ be the modulus of continuity of $J_p^{-1} : E^*\rightarrow E$ on bounded subsets of $E^*$. Observe that by the uniform continuity of $J^{-1}$ on bounded subsets of $E^*$, we have
\begin{eqnarray}\label{e30}
\|x_n-  J^{-1}( Jx_n - {\lambda}_n\left(Ax_n+{\theta}_n(Jx_n-Jx_1)\right)) \|&=&\|J^{-1}(Jx_n)-  J^{-1}( Jx_n - {\lambda}_n\left(Ax_n+{\theta}_n(Jx_n-Jx_1)\right)) \| \nonumber\\
&\leq&{\psi}({\lambda}_nM_0).
\end{eqnarray}
Define $${\epsilon}_0:=\min\left\{1,\frac{\eta r}{4M_0}\right\} \ \mbox{where} \ {\psi}({\lambda}_nM_0)\leq{\epsilon}_0.$$
Applying Lemma \ref{l20} with $y^* := {\lambda}_n \left(Ax_n+{\theta}_n(Jx_n-Jx_1)\right)$ and by using the definition of $x_{n+1}$, we compute as follows,
 \begin{eqnarray*}
{\phi}_p(x^*, x_{n+1})
 & = & {\phi}_p\left(x^*, J^{-1}\left(Jx_n - {\lambda}_n\left(Ax_n+{\theta}_n(Jx_n-Jx_1)\right)\right)\right)\\
 & = & V_p\left(x^*, Jx_n - {\lambda}_n\left(Ax_n+{\theta}_n(Jx_n-Jx_1)\right)\right)\\
 & \leq & V_p(x^*, Jx_n )\\
 &&-p{\lambda}_n\left\langle J^{-1}( Jx_n - {\lambda}_n\left(Ax_n+{\theta}_n(Jx_n-Jx_1)\right))-x^*, Ax_n+{\theta}_n(Jx_n-Jx_1)\right\rangle \\
 &= & {\phi}_p(x^*, x_n )-p{\lambda}_n \left\langle x_n-x^*,Ax_n+{\theta}_n(Jx_n-Jx_1)\right\rangle\\
 & &-p{\lambda}_n\left\langle J^{-1}( Jx_n - {\lambda}_n\left(Ax_n+{\theta}_n(Jx_n-Jx_1)\right))-x_n, Ax_n+{\theta}_n(Jx_n-Jx_1)\right\rangle.
 \end{eqnarray*}
 By Schwartz inequality and uniform continuity of $J^{-1}$ on bounded subsets of $E^*$ (Lemma \ref{l12}), we obtain
\begin{eqnarray*}
{\phi}_p(x^*, x_{n+1})
& \leq & {\phi}_p(x^*, x_n )-p{\lambda}_n \left\langle x_n-x^*,Ax_n+{\theta}_n(Jx_n-Jx_1)\right\rangle \\
& & + p{\lambda}_n{\psi}({\lambda}_nM_0)M_0 \ \mbox{(By applying inequality (\ref{e30}))}\\
& \leq &{\phi}_p(x^*, x_n )-p{\lambda}_n \left\langle x_n-x^*,Ax_n-Ax^*\right\rangle  (\mbox{since ~} \ x^*\in N(A)) \\
& &-p{\lambda}_n{\theta}_n \left\langle x_n-x^*, Jx_n-Jx_1\right\rangle +p{\lambda}_n{\psi}({\lambda}_nM_0)M_0.
\end{eqnarray*}
By Lemma \ref{l31}, $p\left\langle x^*-x_n, Jx_n-Jx_1\right\rangle \leq {\phi}_p(x^*, x_n )-{\phi}_p(x^*, x_1 )=0$. Therefore, using strong monotonicity property of $A,$ we have,
\begin{eqnarray*}
{\phi}_p(x^*, x_{n+1})
 &\leq&  {\phi}_p(x^*, x_n )-p\eta{\lambda}_n{\|x_n -x^* \|}^p -p{\lambda}_n{\theta}_n \left\langle x_n-x^*, Jx_n-Jx_1\right\rangle +p{\lambda}_n{\psi}({\lambda}_nM_0)M_0 \\
 &\leq&  {\phi}_p(x^*, x_n )-p\eta{\lambda}_n{\|x_n -x^* \|}^p +p{\lambda}_n{\theta}_n \left\langle x^*-x_n, Jx_n-Jx_1\right\rangle +p{\lambda}_n{\psi}({\lambda}_nM_0)M_0  \\
 &\leq&  {\phi}_p(x^*, x_n )-p\eta{\lambda}_n\left({\phi}_p (x^*, x_n)- \frac{p}{q}{\| x^*\|}^q \right) +p{\lambda}_n{\psi}({\lambda}_nM_0)M_0 \\
 &=&  {\phi}_p(x^*, x_n )-p\eta{\lambda}_n{\phi}_p(x^*, x_n )+ p\eta{\lambda}_n\left(\frac{p}{q}{\| x^*\|}^q \right)+p{\lambda}_n{\psi}({\lambda}_nM_0)M_0 \\
 &\leq& (1-p\eta{\lambda}_n)r+ p\eta{\lambda}_n\frac{r}{4}+ p{\lambda}_n{\epsilon}_0M_0\\
 &\leq& (1-p\eta{\lambda}_n)r+\frac{p\eta {\lambda}_n}{4}r+\frac{p\eta {\lambda}_n}{4}r \\
 &= &\left(1-\frac{p\eta{\lambda}_n}{2}\right)r<r.
 \end{eqnarray*}
Hence, ${\phi}_p(x^*, x_{n+1}) \leq r.$ By induction, ${\phi}_p(x^*, x_n) \leq r  ~~ \forall  ~~ n\in \N.$ Thus, from inequality (\ref{e11}), $\left\{x_n\right\}$ is bounded.
\vskip 0.5 truecm

{\bf Part 2:} We now show that $\left\{x_n \right\}$ converges strongly to a solution of $Ax=0.$ Strongly monotone implies monotone, since $A$ is monotone and also satisfies the range condition and by the strict convexity of $E$, we obtain for every $t>0$, and $x\in E$, there exists a unique $x_t\in D(A)$, where $D(A)$ is the domain of $A$ such that
$$Jx\in Jx_t+tAx_t.$$
If $J_tx=x_t$, then we can define a single-valued mapping $J_t : E\rightarrow D(A)$ by $J_t=(J+tA)^{-1}J$. Such a $J_t$ is called the resolvent of $A$. Therefore, by Theorem \ref{t8}, for each $n\in \N$, there exists a unique $y_n\in D(A)$ such that

$$y_n=(J+\frac{1}{{\theta}_n}A)^{-1}Jx_1.$$

Then, we have $(J+\frac{1}{{\theta}_n}A)y_n=Jx_1$, such that
\begin{equation}\label{e24}
{\theta}_n(Jy_n-Jx_1)+Ay_n=0.
\end{equation} 
Observe that the sequence $\left\{y_n\right\}$ is bounded because it is a convergent sequence by Theorem \ref{t8}. Moreover, $\left\{x_n\right\}$ is bounded and hence $\left\{Ax_n\right\}$ is bounded. Following the same arguments as in part 1, we get,
\begin{eqnarray}\label{e15}
{\phi}_p(y_n, x_{n+1}) &\leq&  {\phi}_p(y_n, x_n )-p{\lambda}_n \left\langle x_n-y_n,Ax_n+{\theta}_n(Jx_n-Jx_1)\right\rangle+p{\lambda}_n{\psi}({\lambda}_nM_0)M_0\nonumber \\
&\leq & {\phi}_p(y_n, x_n )-p{\lambda}_n \left\langle x_n-y_n,Ax_n+{\theta}_n(Jx_n-Jx_1)\right\rangle+p{\lambda}_n{\epsilon}_0M_0.
\end{eqnarray}
By the strong monotonicity of $A$ and using Theorem \ref{t3} and Eq. (\ref{e24}), we obtain,
\begin{eqnarray*}
\left\langle x_n-y_n,Ax_n+{\theta}_n(Jx_n-Jx_1)\right\rangle
&= & \left\langle x_n-y_n,Ax_n+{\theta}_n(Jx_n-Jy_n+Jy_n-Jx_1)\right\rangle\\
&= & {\theta}_n\left\langle x_n-y_n,Jx_n-Jy_n\right\rangle + \left\langle x_n-y_n,Ax_n+{\theta}_n(Jy_n-Jx_1)\right\rangle\\
&= &{\theta}_n\left\langle x_n-y_n,Jx_n-Jy_n\right\rangle+\left\langle x_n-y_n,Ax_n-Ay_n \right\rangle\\
&\geq & {\theta}_ng(\|x_n-y_n \|) +\eta{\|x_n-y_n \|}^p \\
&\geq & \frac{1}{p}{\theta}_n {\phi}_p(y_n, x_n ) \ (\mbox{by Lemma \ref{l8} for some real constants} \ p > 1).
\end{eqnarray*}
Therefore, the inequality (\ref{e15}) becomes
\begin{eqnarray}\label{e25}
{\phi}_p(y_n, x_{n+1}) &\leq&  (1-{\lambda}_n{\theta}_n){\phi}_p(y_n, x_n ) +p{\lambda}_n{\epsilon}_0M_0.
\end{eqnarray}

Observe that by Lemma \ref{l31}, we have
\begin{eqnarray}\label{e26}
{\phi}_p(y_n, x_n )&\leq &  {\phi}_p(y_{n-1}, x_n )-p\left\langle y_n-x_n, Jy_{n-1}-Jy_n\right\rangle \nonumber \\ 
 &=& {\phi}_p(y_{n-1}, x_n )+p\left\langle x_n-y_n, Jy_{n-1}-Jy_n \right\rangle \nonumber \\
& \leq& {\phi}_p(y_{n-1}, x_n )+\|Jy_{n-1}-Jy_n\|\|x_n-y_n\|.
\end{eqnarray}

Let $R > 0$ such that $\|x_1\| \leq R, \|y_n\| \leq R$ for all $n \in  \N$. We obtain from Eq.(\ref{e24}) that
$$Jy_{n-1}-Jy_n+\frac{1}{{\theta}_n}\left(Ay_{n-1}-Ay_n\right)=  \frac{{\theta}_{n-1}-{\theta}_n}{{\theta}_n}\left(Jx_1-Jy_{n-1}\right).$$
 By taking the duality pairing of each side of this equation with respect to $y_{n-1}-y_n$ and by the strong monotonicity of $A$, we have
$$\left\langle Jy_{n-1}-Jy_n, y_{n-1}-y_n\right\rangle \leq  \frac{{\theta}_{n-1}-{\theta}_n}{{\theta}_n}\| Jx_1-Jy_{n-1}\|\| y_{n-1}-y_n\|,$$
which gives,
\begin{equation}\label{e27}
 \|Jy_{n-1}-Jy_n\| \leq \left( \frac{{\theta}_{n-1}}{{\theta}_n}-1\right)\|Jy_{n-1}-Jx_1\|.
\end{equation}
Using (\ref{e26}) and (\ref{e27}), the inequality (\ref{e25}) becomes
$${\phi}_p(y_n, x_{n+1}) \leq  (1-{\lambda}_n{\theta}_n){\phi}_p(y_{n-1}, x_n)+C\left( \frac{{\theta}_{n-1}}{{\theta}_n}-1\right)+ p{\lambda}_n{\epsilon}_0M_0,$$

for some constant $C > 0$.
By Lemma \ref{l11}, ${\phi}_p(y_{n-1}, x_n )\rightarrow 0$ as $n\rightarrow \infty$ and using Lemma \ref{l10}, we have that $x_n-y_{n-1}\rightarrow 0$  as $n\rightarrow \infty$.
Since by Theorem \ref{t8}, $y_n\rightarrow x^* \in N(A)$, we obtain that $x_n\rightarrow x^*$ as $n\rightarrow \infty$.

\end{proof}
\begin{corollary}\label{h5}
Let $E$ be a uniformly smooth and uniformly convex real Banach space and $E^*$ be its dual space. Suppose $A : E\rightarrow  E^*$ is a bounded and maximal monotone mapping such that $A^{-1}0\neq \emptyset$. Let  $\left\{ {\lambda}_n\right\}$ and  $\left\{ {\theta}_n\right\}$ be real sequences in $(0,1)$ such that,
\begin{itemize}
	\item[(i)] $\lim {\theta}_n =0$ and $\left\{ {\theta}_n\right\}$ is decreasing;
	\item[(ii)]$ \displaystyle\sum_{n=1}^{\infty} {\lambda}_n {\theta}_n=\infty$;
	\item[(iii)]$\displaystyle \lim_{n\rightarrow \infty}\left(({\theta}_{n-1}/{\theta}_n)-1\right)/{\lambda}_n {\theta}_n=0,~~ \displaystyle\sum_{n=1}^{\infty} {\lambda}^2_n <\infty$.
\end{itemize}	 

 For arbitrary $x_1 \in E$, define $\left\{x_n\right\}$ iteratively by:
 \begin{equation}
x_{n+1} = J^{-1}\left(Jx_n - {\lambda}_n\left(Ax_n+{\theta}_n(Jx_n-Jx_1)\right)\right), n \in \N,
\end{equation}
where $J$ is the generalized duality mapping from $E$ into $E^*$.
There exists a real constant ${\epsilon}_0>0$ such that ${\psi}({\lambda}_nM_0)\leq{\epsilon}_0, \ \ n \in \N$ for some constant $M_0>0$. Then, the sequence $\left\{x_n \right\}$ converges strongly to the solution of $Ax=0$.
\end{corollary}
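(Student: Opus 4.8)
The plan is to reuse the entire machinery of Theorem \ref{t6}, observing that maximal monotonicity supplies the two ingredients Theorem \ref{t6} obtained from its hypotheses, and that the single place where $\eta$-strong monotonicity entered can be covered by the Tikhonov term ${\theta}_n(Jx_n-Jx_1)$ already built into the recursion. First I would record the reductions. By Lemma \ref{h9}, a maximal monotone $A$ satisfies the range condition, so the resolvents $J_t=(J+tA)^{-1}J$ are single-valued and Eq. (\ref{e24}) is meaningful. Since $E$ is uniformly smooth and uniformly convex, $E^*$ is strictly convex with a Fr\'echet differentiable norm, so Theorem \ref{t8} applies and furnishes, for each $n$, a point $y_n=(J+\frac1{{\theta}_n}A)^{-1}Jx_1$ with ${\theta}_n(Jy_n-Jx_1)+Ay_n=0$ and $y_n\to Px_1\in A^{-1}0$ as ${\theta}_n\to0$; in particular $\{y_n\}$ is bounded.

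The crucial observation is that strong monotonicity is inessential. Along the main estimate one expands, using Eq. (\ref{e24}),
$$\left\langle x_n-y_n,\,Ax_n+{\theta}_n(Jx_n-Jx_1)\right\rangle={\theta}_n\left\langle x_n-y_n,Jx_n-Jy_n\right\rangle+\left\langle x_n-y_n,Ax_n-Ay_n\right\rangle.$$
For merely monotone $A$ the last summand is nonnegative, while Theorem \ref{t3}(ii) bounds the first below by ${\theta}_ng(\|x_n-y_n\|)$, which Lemma \ref{l8} turns into $\tfrac1p{\theta}_n{\phi}_p(y_n,x_n)$. Thus the coercivity modulus $\eta$ of Theorem \ref{t6} is simply replaced by the regularization level ${\theta}_n$, and the recursion (\ref{e25}) is recovered without ever using $\eta>0$.

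With this in hand I would run the two-part scheme. For boundedness (Part 1), fix $x^*\in A^{-1}0$ and induct on ${\phi}_p(x^*,x_n)$: dropping the nonnegative term $-p{\lambda}_n\langle x_n-x^*,Ax_n-Ax^*\rangle$ and applying Lemma \ref{l31} (with $y=x^*$ and points $x_1,x_n$) to the regularization term $-p{\lambda}_n{\theta}_n\langle x_n-x^*,Jx_n-Jx_1\rangle$ yields
$${\phi}_p(x^*,x_{n+1})\leq(1-{\lambda}_n{\theta}_n){\phi}_p(x^*,x_n)+{\lambda}_n{\theta}_n{\phi}_p(x^*,x_1)+p{\lambda}_n{\psi}({\lambda}_nM_0)M_0,$$
which keeps ${\phi}_p(x^*,x_n)$ inside a fixed ball (via a slowly growing bound $r+\sum_{k<n}p{\lambda}_k{\psi}({\lambda}_kM_0)M_0$), and (\ref{e11}) then bounds $\{x_n\}$, legitimizing $M_0$ and the uniform continuity of $J^{-1}$. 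For convergence (Part 2), I would use the moving reference $y_n$, derive (\ref{e25}), correct for the motion of $y_n$ through (\ref{e26})--(\ref{e27}) (which need only Lemma \ref{l31} and the monotonicity of $A$), and feed the resulting inequality ${\phi}_p(y_n,x_{n+1})\leq(1-{\lambda}_n{\theta}_n){\phi}_p(y_{n-1},x_n)+C(\tfrac{{\theta}_{n-1}}{{\theta}_n}-1)+p{\lambda}_n{\epsilon}_0M_0$ into Lemma \ref{l11} with ${\alpha}_n={\lambda}_n{\theta}_n$. Conditions (i)--(iii) give $\sum{\alpha}_n=\infty$ and drive the $\sigma_n$-term to zero, so ${\phi}_p(y_{n-1},x_n)\to0$; Lemma \ref{l10} gives $\|x_n-y_{n-1}\|\to0$, and $y_n\to Px_1\in A^{-1}0$ yields $x_n\to Px_1$, a zero of $A$.

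The genuine obstacle is the boundedness step, precisely because the strong-monotone contraction factor $(1-p\eta{\lambda}_n)$ is now absent: the only decrease available is the weaker $(1-{\lambda}_n{\theta}_n)$ with ${\theta}_n\to0$, so the perturbation $p{\lambda}_n{\psi}({\lambda}_nM_0)M_0$ is no longer automatically dominated by the contraction. Closing the induction therefore requires that this error be summable and correctly balanced against the $\sum{\lambda}_n{\theta}_n=\infty$ growth and the ratio condition in (iii)---an interplay hinging on the decay of ${\psi}$ (the modulus of continuity of $J^{-1}$) together with $\sum{\lambda}_n^2<\infty$. It is this balancing, rather than the routine substitution of monotonicity for strong monotonicity, that must be handled with care.
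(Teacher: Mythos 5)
Your instinct that the corollary cannot be obtained by blindly citing Theorem \ref{t6} is sound, and it makes your route genuinely different from the paper's: the paper's entire proof is the one-line reduction ``maximal monotone $\Rightarrow$ range condition (Lemma \ref{h9}), now apply Theorem \ref{t6},'' which silently ignores that Theorem \ref{t6} assumes $\eta$-strong monotonicity while the corollary has dropped it. You saw that the missing $\eta$ is the real issue and attempted an honest re-derivation, substituting plain monotonicity plus the Tikhonov term for strong monotonicity. Unfortunately, your re-derivation does not close, and it breaks down at exactly the two places where $\eta$ was doing real work.

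First, in Part 2 you assert that $\theta_n\left\langle x_n-y_n, Jx_n-Jy_n\right\rangle \geq \theta_n g(\|x_n-y_n\|)$ can be ``turned into'' $\frac{1}{p}\theta_n{\phi}_p(y_n,x_n)$ by Lemma \ref{l8}. But Lemma \ref{l8} converts ${\|x_n-y_n\|}^p$, not $g(\|x_n-y_n\|)$, into ${\phi}_p$-terms, and the function $g$ of Theorem \ref{t3} carries no stated quantitative relation to ${\|\cdot\|}^p$ or to ${\phi}_p$ (it is only continuous, strictly increasing, convex, with $g(0)=0$). In Theorem \ref{t6} it is precisely the term $\eta{\|x_n-y_n\|}^p$, available only through strong monotonicity, that is fed into Lemma \ref{l8}; with merely monotone $A$ you get $\left\langle x_n-y_n, Ax_n-Ay_n\right\rangle \geq 0$ plus the $g$-term, which is not enough to produce the factor ${\phi}_p(y_n,x_n)$ needed for the recursion (\ref{e25}). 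Second, and you candidly admit this, the boundedness induction of Part 1 does not close: your inequality yields a contraction of size ${\lambda}_n{\theta}_n$ against an error of size $p{\lambda}_n{\psi}({\lambda}_nM_0)M_0$, and since ${\theta}_n\rightarrow 0$ while the hypotheses give only ${\psi}({\lambda}_nM_0)\leq{\epsilon}_0$ (a constant) and $\sum{\lambda}_n^2<\infty$, the error is neither summable nor dominated by the contraction. Calling this ``a balancing that must be handled with care'' is the statement of the obstruction, not its resolution; closing it would require an extra hypothesis of the type used by Chidume--Djitte or Chidume--Idu (e.g. ${\psi}({\lambda}_nM_0)\leq{\gamma}_0{\theta}_n$ or ${\lambda}_n=o({\theta}_n)$), which the corollary does not assume. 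So your proposal has a genuine gap --- although, to be fair, the same gap is hidden inside the paper's own one-line proof.
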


\begin{proof}
Strong monotone implies monotone, therefore the result follows from Lemma \ref{h9} and by Theorem \ref{t6}.
\end{proof}
\begin{corollary}\label{h15}
Chidume and Idu \cite{b28}. Let $E$ be a uniformly convex and uniformly smooth real Banach space and $E^*$ be its dual space.  Let $T : E\rightarrow 2^{E^*}$ be a $J$-pseudocontractive and bounded map such that $(J-T)$ is maximal monotone. Suppose $ F^J_E(T)$=$\left\{ v \in E: Jv \in Tv \right\} \neq \emptyset$. For arbitrary $x_1, u \in E$, define a sequence $\left\{x_n\right\}$ iteratively by:

 \begin{equation}\label{h16}
x_{n+1} = J^{-1}\left((1-{\lambda}_n)Jx_n+{\lambda}_n{\eta}_n -{\lambda}_n{\theta}_n(Jx_n-Ju)\right), {\eta}_n\in Tx_n,~~ n \in \N,
\end{equation}
where $\left\{ {\lambda}_n\right\}$ and  $\left\{ {\theta}_n\right\}$ are real sequences in $(0,1)$ satisfying the following conditions:
\begin{itemize}
  \item[(i)]$ \displaystyle\sum_{n=1}^{\infty} {\lambda}_n {\theta}_n=\infty$,
	\item[(i)]${\lambda}_nM^*_0\leq {\gamma}_0{\theta}_n$; ${\delta}_E^{-1}({\lambda}_nM^*_0)\leq{\gamma}_0{\theta}_n$,
	\item[(iii)]$\frac{{\delta}_E^{-1}\left(\frac{{\theta}_{n-1}-{\theta}_n}{{\theta}_n} K\right)}{{{\lambda}_n \theta}_n}\rightarrow 0$; $\frac{{\delta}_{E^*}^{-1}\left(\frac{{\theta}_{n-1}-{\theta}_n}{{\theta}_n} K\right)}{{{\lambda}_n \theta}_n}\rightarrow 0$ as $n\rightarrow \infty$,
	\item[(iv)]$\frac{1}{2}\frac{{\theta}_{n-1}-{\theta}_n}{{\theta}_n} K \in (0, 1)$,
\end{itemize}
for some constants $M^*_0>0$ and ${\gamma}_0>0$, where ${\delta}_E:(0, \infty)\rightarrow (0, \infty)$ is the modulus of convexity of $E$ and $K:=4RL \sup\left\{\|Jx-Jy\|: \|x\|\leq R, \|y\|\leq R \right\}+1,~~ x, y \in E,~~ R>0$. Then the sequence $\left\{x_n\right\}$ converges strongly to a $J$-fixed point of $T$.
\end{corollary}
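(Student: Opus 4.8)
The plan is to reduce the statement to the monotone framework already developed and then to run the two–part boundedness-plus-convergence argument of Theorem \ref{t6} on the transformed operator. First I would set $A := J - T : E \to 2^{E^*}$. By Lemma \ref{l15}, $T$ is $J$-pseudocontractive if and only if $A$ is monotone; since $(J-T)$ is assumed maximal monotone, $A$ is maximal monotone, and it is bounded because $J$ and $T$ are. A point $v$ is a $J$-fixed point of $T$, i.e. $Jv \in Tv$, precisely when $0 \in (J-T)v = Av$, so $F^J_E(T) = A^{-1}(0) \neq \emptyset$. It then remains to recognize the scheme (\ref{h16}) as the iteration of Theorem \ref{t6}. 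Choosing for each $\eta_n \in Tx_n$ the selection $a_n := Jx_n - \eta_n \in Ax_n$, I would rewrite the bracketed argument as
$$(1-{\lambda}_n)Jx_n + {\lambda}_n\eta_n - {\lambda}_n{\theta}_n(Jx_n - Ju) = Jx_n - {\lambda}_n\big(a_n + {\theta}_n(Jx_n - Ju)\big),$$
so that (\ref{h16}) becomes $x_{n+1} = J^{-1}\big(Jx_n - {\lambda}_n(a_n + {\theta}_n(Jx_n - Ju))\big)$ with $a_n \in Ax_n$ and $u$ in the role of the fixed anchor $x_1$.

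With this identification, I would carry out the two steps of Theorem \ref{t6} at the level of the Lyapunov functional ${\phi}_p$. In Part 1, I would fix $x^* \in A^{-1}(0)$, choose $r$ as in (\ref{e22}), and show by induction that ${\phi}_p(x^*, x_n) \leq r$, so $\{x_n\}$ is bounded; the increment $\|x_{n+1} - x_n\|$, which in Theorem \ref{t6} was controlled through the modulus of continuity ${\psi}$ of $J^{-1}$, is here estimated through the moduli of convexity $\delta_E$ and $\delta_{E^*}$, which is exactly why the hypotheses (ii)--(iv) are phrased via $\delta_E^{-1}$ and $\delta_{E^*}^{-1}$. In Part 2, I would introduce the regularized points $y_n := (J + \tfrac{1}{{\theta}_n}A)^{-1}Ju$, which satisfy ${\theta}_n(Jy_n - Ju) + a_n' = 0$ for some $a_n' \in Ay_n$ and, by Theorem \ref{t8} together with ${\theta}_n \to 0$, converge strongly to $Pu \in A^{-1}(0)$. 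Running the ${\phi}_p(y_n, x_{n+1})$ estimate as in (\ref{e15})--(\ref{e27}), using monotonicity of $A$ and Theorem \ref{t3}(ii) to extract the term governed by $g$, and then invoking Lemma \ref{l11}, I would obtain ${\phi}_p(y_{n-1}, x_n) \to 0$, whence $x_n - y_{n-1} \to 0$ by Lemma \ref{l10} and finally $x_n \to Pu$, a $J$-fixed point of $T$.

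The main obstacle, and the genuine departure from Theorem \ref{t6}, is that $A = J - T$ is only monotone, not $\eta$-strongly monotone, so the term $-p\eta{\lambda}_n\|x_n - x^*\|^p$ that drove both the induction and the contraction there is unavailable. The entire decrease of ${\phi}_p$ must instead be supplied by the regularizer ${\theta}_n(Jx_n - Ju)$: in Part 1 one discards the nonnegative pairing $\langle x_n - x^*, a_n\rangle \geq 0$ (taking $0 \in Ax^*$) and relies on the ${\theta}_n$-term for strict contraction, while in Part 2 the quantity ${\theta}_n\langle x_n - y_n, Jx_n - Jy_n\rangle \geq {\theta}_n g(\|x_n - y_n\|)$ produces the coefficient $1 - {\lambda}_n{\theta}_n$. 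Making these estimates close requires the delicate balancing of ${\lambda}_n$ against ${\theta}_n$ encoded in (ii)--(iv), in particular controlling the resolvent increment $\|Jy_{n-1} - Jy_n\|$ by $({\theta}_{n-1}/{\theta}_n - 1)$ as in (\ref{e27}); a secondary, harmless technicality is the consistent choice of the selections $\eta_n \in Tx_n$ (equivalently $a_n \in Ax_n$), since only the bound $\|a_n\| \leq \|Jx_n\| + \|\eta_n\|$ and the monotonicity inequalities enter the argument.
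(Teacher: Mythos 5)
Your first paragraph is, in substance, the paper's entire proof: the paper defines $A := J-T$, invokes Lemma \ref{l15} to get a bounded maximal monotone map, rewrites (\ref{h16}) as $x_{n+1}=J^{-1}\left(Jx_n-\lambda_n\left(Ax_n+\theta_n(Jx_n-Ju)\right)\right)$ with $u$ as anchor, and then simply cites Corollary \ref{h5}. Your algebra $(1-\lambda_n)Jx_n+\lambda_n\eta_n-\lambda_n\theta_n(Jx_n-Ju)=Jx_n-\lambda_n\left(a_n+\theta_n(Jx_n-Ju)\right)$ with $a_n:=Jx_n-\eta_n\in Ax_n$, and the identification $F^J_E(T)=A^{-1}(0)$, are correct and coincide with that reduction. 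Where you depart is that you do not cite Corollary \ref{h5} but instead try to rerun the two-part argument of Theorem \ref{t6} for the merely monotone $A$. Your diagnosis of why this is necessary is accurate, and in fact sharper than the paper: Theorem \ref{t6} uses $\eta$-strong monotonicity both to drive the induction in Part 1 and (together with Lemma \ref{l8}) to produce the $(1-\lambda_n\theta_n)$ recursion in Part 2, and $J-T$ need not be strongly monotone; the paper's own proof of Corollary \ref{h5} ("strong monotone implies monotone, therefore the result follows \dots by Theorem \ref{t6}") runs this implication backwards.

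The gap is that your sketch does not actually close what you correctly identified as the crux. In Part 2 you assert that $\theta_n\langle x_n-y_n,Jx_n-Jy_n\rangle\geq\theta_n g(\|x_n-y_n\|)$ "produces the coefficient $1-\lambda_n\theta_n$," but to feed Lemma \ref{l11} you need a lower bound of the form $p\langle x_n-y_n,Jx_n-Jy_n\rangle\geq\phi_p(y_n,x_n)$, and none of the cited tools (Theorem \ref{t3}, Lemma \ref{l8}) delivers it once the $\eta\|x_n-y_n\|^p$ term is gone: Xu's $g$ is an abstract convex function with no stated comparison to $\phi_p$. For $p=2$ one can get it from the identity $2\langle x-y,Jx-Jy\rangle=\phi(x,y)+\phi(y,x)\geq\phi(y,x)$, but you neither state nor prove such an inequality, and for general $p$ the analogous computation produces the extra term $(p-1)\left(\|x\|^p+\|y\|^p-\|x\|^q-\|y\|^q\right)$, which can be negative. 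Similarly, in Part 1 the "strict contraction from the $\theta_n$-term" requires the Lemma \ref{l31} estimate $p\langle x_n-x^*,Jx_n-Ju\rangle\geq\phi_p(x^*,x_n)-\phi_p(x^*,u)$, and then the error $p\lambda_n\psi(\lambda_nM_0)M_0$ must be absorbed by a contraction that is now only of size $\lambda_n\theta_n$ (not $\lambda_n$, as in the strongly monotone case); this forces a condition of the type $\psi(\lambda_nM_0)\lesssim\theta_n$ — which is indeed what hypothesis (ii) of the corollary encodes — but you never write the inequality chain that keeps $\phi_p(x^*,x_{n+1})\leq r$ under it. So as it stands your proposal is the paper's reduction plus a program for proving Corollary \ref{h5}; the two estimates on which that program turns are named but not established.
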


\begin{proof}
Define $A:= (J-T)$, then by the Lemma \ref{l15}, $A$ is a bounded maximal monotone map. Therefore, the iterative sequence (\ref{h16}) is equivalent to 
 \begin{equation}
x_{n+1} = J^{-1}\left(Jx_n - {\lambda}_n\left(Ax_n+{\theta}_n(Jx_n-Ju)\right)\right), n \in \N,
\end{equation}
where $J$ is the normalized duality mapping from $E$ into $E^*$. Hence, the result follows from the Corollary \ref{h5}.
\end{proof}

\begin{corollary}\label{h6}
Let $E$ be a uniformly smooth and uniformly convex real Banach space and $E^*$ be its dual space. Suppose $A : E\rightarrow  E^*$ is a semicontinuous bounded monotone mapping such that $A^{-1}0\neq \emptyset$. Let  $\left\{ {\lambda}_n\right\}$ and  $\left\{ {\theta}_n\right\}$ be real sequences in $(0,1)$ such that, 
\begin{itemize}
	\item[(i)] $\lim {\theta}_n =0$ and $\left\{ {\theta}_n\right\}$ is decreasing;
	\item[(ii)]$ \displaystyle\sum_{n=1}^{\infty} {\lambda}_n {\theta}_n=\infty$;
	\item[(iii)]$\displaystyle \lim_{n\rightarrow \infty}\left(({\theta}_{n-1}/{\theta}_n)-1\right)/{\lambda}_n {\theta}_n=0,~~ \displaystyle\sum_{n=1}^{\infty} {\lambda}^2_n <\infty$.
\end{itemize}			 
 
 For arbitrary $x_1 \in E$, define $\left\{x_n\right\}$ iteratively by:
\begin{equation}
x_{n+1} = J^{-1}\left(Jx_n - {\lambda}_n\left(Ax_n+{\theta}_n(Jx_n-Jx_1)\right)\right), n \in \N,
\end{equation}
where $J$ is the generalized duality mapping from $E$ into $E^*$.
There exists a real constant ${\epsilon}_0>0$ such that ${\psi}({\lambda}_nM_0)\leq{\epsilon}_0, \ \ n \in \N$ for some constant $M_0>0$. Then, the sequence $\left\{x_n \right\}$ converges strongly to the solution of $Ax=0$.  
\end{corollary}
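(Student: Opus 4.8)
The plan is to reduce Corollary \ref{h6} to Corollary \ref{h5} by upgrading the hypothesis ``semicontinuous bounded monotone'' to ``bounded maximal monotone,'' after which the iteration, the conditions (i)--(iii) on $\{\lambda_n\}$ and $\{\theta_n\}$, and the constants $\epsilon_0, M_0$ all transfer verbatim. The bridge is Lemma \ref{h4}: a semicontinuous monotone map $A : X \to X^*$ with full domain is automatically maximal monotone, so the whole point is to manufacture maximal monotonicity and then invoke the corollary already in hand.

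Concretely, I would argue as follows. Since $A : E \to E^*$ is defined on all of $E$, we have $D(A) = E$; $A$ is assumed semicontinuous and monotone, so Lemma \ref{h4} (Cioranescu) applies and yields that $A$ is maximal monotone. Because $A$ is also bounded by hypothesis and $A^{-1}0 \neq \emptyset$, the operator $A$ is a bounded maximal monotone map satisfying every structural requirement of Corollary \ref{h5}. The sequences $\{\lambda_n\}, \{\theta_n\}$ obey exactly the conditions listed there, and the existence of $\epsilon_0 > 0$ with $\psi(\lambda_n M_0) \leq \epsilon_0$ is carried over unchanged, so Corollary \ref{h5} applies directly to the same iterative scheme and gives that $\{x_n\}$ converges strongly to the solution of $Ax = 0$.

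The step I expect to need the most care is checking that the full-domain condition $D(A) = E$ in Lemma \ref{h4} is genuinely met and that semicontinuity is the continuity notion intended there; once these are confirmed the maximal-monotonicity conclusion is immediate and the reduction is mechanical. A more delicate point, were one to unfold the chain rather than cite Corollary \ref{h5} as a black box, is that the decisive a priori estimate in Theorem \ref{t6} is built around the strong-monotonicity constant $\eta$, which is absent for a merely monotone $A$. The coercivity that compensates for this must be supplied by the Tikhonov-type regularizing term $\theta_n(Jx_n - Jx_1)$ together with the regularized points $y_n = (J + \tfrac{1}{\theta_n}A)^{-1}Jx_1$ and their strong convergence $y_n \to x^* \in N(A)$ guaranteed by Theorem \ref{t8} (Kido). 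Thus the genuine analytic content lives in Part~2 of the argument, where only monotonicity of $A$ and the regularization are exploited, rather than in the boundedness step of Part~1.
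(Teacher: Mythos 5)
Your proposal is correct and follows exactly the paper's own route: the paper's proof of Corollary \ref{h6} is precisely the one-line reduction ``the result follows from Lemma \ref{h4} and by the Corollary \ref{h5},'' i.e.\ using semicontinuity plus full domain to upgrade $A$ to maximal monotone and then citing Corollary \ref{h5} verbatim. Your additional observation---that the strong-monotonicity constant $\eta$ in Theorem \ref{t6} has no analogue for merely monotone $A$, so the burden must fall on the regularization term and Kido's theorem in Part~2---is a sharper reading of the chain than the paper itself offers, but it does not change the fact that the approach is the same.
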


\begin{proof}
The result follows from Lemma \ref{h4} and by the Corollary \ref{h5}.
\end{proof}

\begin{corollary}\label{h7}
Let $E$ be a uniformly smooth and uniformly convex real Banach space and $E^*$ be its dual space. Suppose $A : E\rightarrow  E^*$ is a uniformly continuous and maximal monotone mapping such that $A^{-1}0\neq \emptyset$. Let  $\left\{ {\lambda}_n\right\}$ and  $\left\{ {\theta}_n\right\}$ be real sequences in $(0,1)$ such that,
\begin{itemize}
	\item[(i)] $\lim {\theta}_n =0$ and $\left\{ {\theta}_n\right\}$ is decreasing;
	\item[(ii)]$ \displaystyle\sum_{n=1}^{\infty} {\lambda}_n {\theta}_n=\infty$;
	\item[(iii)]$\displaystyle \lim_{n\rightarrow \infty}\left(({\theta}_{n-1}/{\theta}_n)-1\right)/{\lambda}_n {\theta}_n=0,~~ \displaystyle\sum_{n=1}^{\infty} {\lambda}^2_n <\infty$.
\end{itemize}	   
 For arbitrary $x_1 \in E$, define $\left\{x_n\right\}$ iteratively by:
 \begin{equation}
x_{n+1} = J^{-1}\left(Jx_n - {\lambda}_n\left(Ax_n+{\theta}_n(Jx_n-Jx_1)\right)\right), n \in \N,
\end{equation}
where $J$ is the generalized duality mapping from $E$ into $E^*$.
There exists a real constant ${\epsilon}_0>0$ such that ${\psi}({\lambda}_nM_0)\leq{\epsilon}_0, \ \ n \in \N$ for some constant $M_0>0$. Then, the sequence $\left\{x_n \right\}$ converges strongly to the solution of $Ax=0$.
\end{corollary}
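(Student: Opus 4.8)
The plan is to reduce this corollary to Corollary \ref{h5}, whose hypotheses ask that $A$ be \emph{bounded} and maximal monotone. Here $A$ is already assumed to be maximal monotone, so the only gap to close is the boundedness of $A$. The key observation is that uniform continuity automatically forces $A$ to carry bounded sets to bounded sets, which is exactly the notion of boundedness used throughout the paper.

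First I would invoke Lemma \ref{l25} with $X = E$ and $Y = E^*$: for every $r > 0$ and every fixed $x^* \in E$, the uniform continuity of $A$ guarantees that the image $A\bigl(B(x^*, r)\bigr)$ is bounded in $E^*$. Hence $A$ is a bounded map in the sense required by Corollary \ref{h5}. In particular, restricting attention to the ball containing the iterates $\{x_n\}$, the supremum $M_0$ built as in the proof of Theorem \ref{t6} is finite, so the constant ${\epsilon}_0$ and the estimate ${\psi}({\lambda}_n M_0) \leq {\epsilon}_0$ are meaningful.

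Having established that $A$ is both bounded and maximal monotone, I would then apply Corollary \ref{h5} directly. The conditions (i)--(iii) on $\{{\lambda}_n\}$ and $\{{\theta}_n\}$ coincide with those of Corollary \ref{h5}, the iterative scheme is identical, and the same constants $M_0$ and ${\epsilon}_0$ enter. Corollary \ref{h5} then gives strong convergence of $\{x_n\}$ to the solution of $Ax = 0$, finishing the argument. I expect no substantive obstacle: the entire content lies in the single step of converting uniform continuity into boundedness via Lemma \ref{l25}, after which the conclusion is immediate.
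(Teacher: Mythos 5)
Your proposal matches the paper's own proof exactly: the paper likewise deduces boundedness of $A$ from uniform continuity via Lemma \ref{l25} and then applies Corollary \ref{h5}. Your write-up simply makes explicit the reduction step that the paper states in one line, so there is nothing to correct.
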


\begin{proof}
The result follows from Lemma \ref{l25} and by the Corollary \ref{h5}.
\end{proof}

\begin{corollary}\label{t4}
Aibinu and Mewomo \cite{b9}. Let $E$ be a $p$-uniformly convex real Banach space with uniformly G$\hat{a}$teaux differentiable norm such that $\frac{1}{p}+\frac{1}{q}=1, p\geq 2$ and $E^*$ its dual space. Let $A : E\rightarrow  E^*$ be a bounded and $\eta$-strongly monotone mapping such that $A^{-1}0\neq \emptyset$. For arbitrary $x_1 \in E$, let $\left\{x_n\right\}$ be the sequence defined iteratively by
\begin{equation}
x_{n+1} = J^{-1}(Jx_n - {\lambda}_nAx_n),  n \in \N,
\end{equation}
where $J$ is the generalized duality mapping from $E$ into $E^*$ and $\left\{ {\lambda}_n\right\} \subset (0, {\gamma}_0), {\gamma}_0 \leq 1$ is a real sequence satisfying the following conditions:
\begin{itemize}
	\item[(i)] $\displaystyle\sum_{n=1}^{\infty} {\lambda}_n = \infty$; 
	\item[(ii)]$ \displaystyle\sum_{n=1}^{\infty} {\lambda}^2_n <\infty$.
\end{itemize}	
Then, the sequence $\left\{x_n \right\}$ converges strongly to the unique point $x^* \in A^{-1}0$.
\end{corollary}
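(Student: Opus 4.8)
The plan is to reuse the Lyapunov-function machinery of Theorem \ref{t6}, the essential simplification being that the $\eta$-strong monotonicity of $A$ now supplies the contraction factor by itself, so the regularizing term ${\theta}_n(Jx_n-Jx_1)$ of Theorem \ref{t6} can be dropped. I would first note that strong monotonicity makes the zero set a singleton: if $Ax^*=Ax^{**}=0$, then $0=\left\langle x^*-x^{**},Ax^*-Ax^{**}\right\rangle\geq\eta\|x^*-x^{**}\|^p$ forces $x^*=x^{**}$. Fixing this unique $x^*\in A^{-1}0$ and writing $a_n:=\phi_p(x^*,x_n)$, I would derive the one-step estimate exactly as in the proof of Theorem \ref{t6}: from $x_{n+1}=J^{-1}(Jx_n-{\lambda}_nAx_n)$ and Lemma \ref{l20},
\[
\phi_p(x^*,x_{n+1})=V_p(x^*,Jx_n-{\lambda}_nAx_n)\leq\phi_p(x^*,x_n)-p{\lambda}_n\left\langle x_{n+1}-x^*,Ax_n\right\rangle.
\]
Splitting $x_{n+1}-x^*=(x_n-x^*)+(x_{n+1}-x_n)$, using $Ax^*=0$ with strong monotonicity on the first part and the Schwartz inequality on the second, gives
\[
a_{n+1}\leq a_n-p\eta{\lambda}_n\|x_n-x^*\|^p+p{\lambda}_n\|x_{n+1}-x_n\|\,\|Ax_n\|.
\]

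To render the perturbation harmless I would first prove that $\{x_n\}$ is bounded by the induction of Part 1 of Theorem \ref{t6} (choosing $r\geq\max\{\phi_p(x^*,x_1),\frac{4p}{q}\|x^*\|^q\}$), so that boundedness of $A$ yields $\|Ax_n\|\leq M_0$ for some $M_0>0$. Since $E$ is $p$-uniformly convex, $J^{-1}=J_q^*$ is uniformly continuous on bounded subsets of $E^*$ by Lemma \ref{l12}; hence, with ${\psi}$ the corresponding modulus of continuity,
\[
\|x_{n+1}-x_n\|=\|J^{-1}(Jx_n-{\lambda}_nAx_n)-J^{-1}(Jx_n)\|\leq{\psi}({\lambda}_nM_0),
\]
so the perturbation term is bounded by $p{\lambda}_n{\psi}({\lambda}_nM_0)M_0$.

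For the conclusion I would deliberately \emph{not} convert $\|x_n-x^*\|^p$ into $a_n$ through Lemma \ref{l8}, because that produces the non-vanishing constant $\frac{p}{q}\|x^*\|^q$, which violates hypothesis (ii) of Lemma \ref{l11}. Instead I would sum the displayed one-step estimate: since $a_n\geq0$ and, using condition (ii) together with the quantitative decay of ${\psi}$ inherited from the $p$-uniform convexity of $E$, the error series $\sum_n{\lambda}_n{\psi}({\lambda}_nM_0)M_0$ converges, one obtains both that $a_n$ converges and that $\sum_n{\lambda}_n\|x_n-x^*\|^p<\infty$. Coupled with condition (i), $\sum{\lambda}_n=\infty$, the latter forces $\liminf_n\|x_n-x^*\|=0$; identifying the limit of $a_n$ as $0$ and invoking the $\phi_p$-analogue of Lemma \ref{l10} then yields the strong convergence $x_n\to x^*$. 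I expect the main obstacle to be precisely the summability of $\sum_n{\lambda}_n{\psi}({\lambda}_nM_0)$: bare uniform continuity of $J^{-1}$ gives only ${\psi}({\lambda}_nM_0)\to0$, so one must extract a genuinely quantitative (Hölder-type) modulus from the $p$-uniform convexity hypothesis in order to dominate this series by $\sum{\lambda}_n^2$, and it is this matching of exponents that requires care.
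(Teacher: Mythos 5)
Your proposal does not follow the paper's route: the paper disposes of this corollary in a single line, by formally setting ${\theta}_n=0$ in Theorem \ref{t6}. (That one-liner is itself shaky, since ${\theta}_n=0$ violates hypotheses (i)--(iii) of Theorem \ref{t6} --- in particular $\sum {\lambda}_n{\theta}_n=\infty$ fails and the ratio condition becomes vacuous --- so your instinct to supply a direct argument is reasonable.) Your boundedness half is fine and is essentially the paper's Part 1 with the ${\theta}_n$-terms deleted: uniqueness of the zero, the one-step estimate via Lemma \ref{l20}, the splitting of $x_{n+1}-x^*$, the bound $\|x_{n+1}-x_n\|\leq\psi({\lambda}_nM_0)$, and the induction all survive. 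Where you genuinely depart is the convergence half: the paper runs the regularization path $y_n=(J+\frac{1}{{\theta}_n}A)^{-1}Jx_1$ and invokes Kido's Theorem \ref{t8}, which is unavailable to you precisely because you dropped ${\theta}_n$; you replace it with a summation argument.

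That replacement is where the gap sits, and you diagnosed it correctly but did not close it --- and it cannot be closed in the form you propose. You need $\sum_n{\lambda}_n\psi({\lambda}_nM_0)<\infty$. The quantitative modulus the hypotheses actually guarantee is this: $E$ being $p$-uniformly convex with $p\geq 2$ makes $E^*$ $q$-uniformly smooth with $q=p/(p-1)\in(1,2]$, whence $J^{-1}$ is H\"older of exponent $q-1=1/(p-1)$ on bounded sets, so ${\lambda}_n\psi({\lambda}_nM_0)\simeq{\lambda}_n^{1+1/(p-1)}={\lambda}_n^{q}$. Since $q\leq 2$ and ${\lambda}_n<1$, one has ${\lambda}_n^q\geq{\lambda}_n^2$, and for every $p>2$ the series $\sum{\lambda}_n^q$ can diverge while $\sum{\lambda}_n^2<\infty$: take ${\lambda}_n=n^{-\alpha}$ with $\frac{1}{2}<\alpha<\frac{p-1}{p}$ (e.g. $p=3$, $q=\frac{3}{2}$, $\alpha=0.6$). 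So hypothesis (ii) dominates your error series only when $p=2$; the matching of exponents you flagged as delicate in fact fails. You also cannot retreat to Lemma \ref{l11}, because your negative term is ${\|x_n-x^*\|}^p$ rather than a multiple of $a_n$, and converting it via Lemma \ref{l8} reintroduces the constant $\frac{p}{q}{\|x^*\|}^q$ you rightly avoided. The repair is to abandon summability altogether: from $a_{n+1}\leq a_n-p\eta{\lambda}_n{\|x_n-x^*\|}^p+{\lambda}_n{\delta}_n$ with ${\delta}_n:=p\psi({\lambda}_nM_0)M_0\rightarrow 0$, a contradiction argument using $\sum{\lambda}_n=\infty$ gives $\liminf\|x_n-x^*\|=0$, and a Maing\'e-type dichotomy (either $a_n$ is eventually nonincreasing, or one works along $\tau(n):=\max\{k\leq n: a_k<a_{k+1}\}$, where the recursion forces $p\eta{\|x_{\tau(n)}-x^*\|}^p\leq{\delta}_{\tau(n)}\rightarrow 0$) then yields $a_n\rightarrow 0$ with no summability assumption on the error; alternatively, accept the paper's one-line reduction.
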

\begin{proof}
  By taking ${\theta}_n=0$ in Theorem \ref{t6}, we obtain the desired result.
\end{proof}

\begin{corollary}
Diop et al. \cite{b13}.  Let $E$ be a $2$-uniformly convex real Banach space with uniformly G$\hat{a}$teaux differentiable norm and $E^*$ its dual space. Let $A : E\rightarrow  E^*$ be a bounded and $k$-strongly monotone mapping such that $ A^{-1}0\neq \emptyset $. For arbitrary $x_1 \in E$, let $\left\{x_n\right\}$ be the sequence defined iteratively by:
\begin{equation}
x_{n+1} = J^{-1}(Jx_n - {\alpha}_nAx_n), n\in \N,
\end{equation}
where $J$ is the normalized duality mapping from $E$ into $E^*$ and $\left\{ a_n\right\} \subset (0, 1)$ is a real sequence satisfying the following conditions:
\begin{itemize}
\item[(i)] $\displaystyle\sum_{n=1}^{\infty} {\alpha}_n = \infty$;
\item[(ii)]$\displaystyle\sum_{n=1}^{\infty} {\alpha}^2_n <\infty$.
\end{itemize}
Then, there exists ${\gamma}_0 > 0$ such that if $ {\alpha}_n < {\gamma}_0 $, the sequence $\left\{x_n \right\}$ converges strongly to the unique solution of the equation $Ax = 0$.
\end{corollary}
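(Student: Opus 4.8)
The plan is to recognize this statement as nothing more than the special case $p=2$ of Corollary \ref{t4} (Aibinu and Mewomo), and to verify that every hypothesis and conclusion matches after a change of labels. First I would fix $p=2$; the constraint $\frac{1}{p}+\frac{1}{q}=1$ then forces $q=2$ as well, so $p\geq 2$ is satisfied with equality. In this regime the functionals $\phi_p$ and $V_p$ of Definition \ref{d1} collapse to $\phi$ and $V$ of Definition \ref{d2} (indeed Remark \ref{r1}(i) records $\phi_2=\phi$), so the auxiliary estimates invoked upstream — Lemma \ref{l8}, Lemma \ref{l20}, Lemma \ref{l31} — are available with no new machinery.

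Next I would align the data. A $2$-uniformly convex space is exactly a $p$-uniformly convex space with $p=2$, so the structural assumption on $E$ is the one demanded by Corollary \ref{t4}; the uniformly G\^ateaux differentiable norm hypothesis is identical in both statements. The $k$-strongly monotone assumption is the $\eta$-strongly monotone assumption with $\eta=k$, and the relabeling of the step sizes from $\lambda_n$ to $\alpha_n$ is purely cosmetic. The conditions (i) $\sum_{n=1}^{\infty}\alpha_n=\infty$ and (ii) $\sum_{n=1}^{\infty}\alpha_n^2<\infty$ are precisely conditions (i) and (ii) of Corollary \ref{t4}, and the concluding clause ``there exists $\gamma_0>0$ such that if $\alpha_n<\gamma_0$'' is exactly the requirement $\{\lambda_n\}\subset(0,\gamma_0)$ imposed there. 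The iterative formula $x_{n+1}=J^{-1}(Jx_n-\alpha_n Ax_n)$ coincides with that of Corollary \ref{t4} once the labels are identified.

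With the hypotheses matched, I would simply invoke Corollary \ref{t4} to conclude that $\{x_n\}$ converges strongly to the unique point of $A^{-1}0$, which is the unique solution of $Ax=0$. The only point requiring a word of care — and the closest thing to an obstacle — is confirming that $2$-uniform convexity together with a uniformly G\^ateaux differentiable norm still supplies the geometric ingredients consumed in the chain of proofs, namely the single-valuedness and uniform continuity of $J$ and $J^{-1}$ on bounded sets (Lemma \ref{l13} and Lemma \ref{l12}) and the two inequalities of Theorem \ref{t3}. Since Corollary \ref{t4} is itself derived from Theorem \ref{t6} under these same smoothness and convexity hypotheses, these ingredients are inherited directly, and the argument reduces to the parameter identification above with no additional verification.
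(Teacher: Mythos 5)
Your proposal is correct and matches the paper's own proof, which consists of exactly the same step: taking $p=2$ in Corollary \ref{t4} and identifying the relabeled data ($\alpha_n$ for $\lambda_n$, $k$ for $\eta$). The additional verification you carry out — that $q=2$, that $\phi_2=\phi$, and that the geometric ingredients are inherited — is sound but is left implicit in the paper.
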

\begin{proof}
  By taking $p=2$ in Corollary \ref{t4}, we obtain the desired result.
\end{proof}
\begin{corollary}
Chidume and Djitte \cite{b6}: Let $E$ be a $2$-uniformly smooth real Banach space, and let $A : E \rightarrow E$ be a bounded $m$-accretive mapping. For arbitrary $x_1\in E$, define the sequence $\left\{x_n\right\}$ iteratively by 
$$x_{n+1}:=x_n-{\lambda}_nAx_n-{\lambda}_n{\theta}_n(x_n-x_1),  ~~n\in \N, $$
where $\left\{{\lambda}_n\right\}$ and $\left\{{\theta}_n\right\}$ are sequences in $(0,1)$ satisfying the conditions:
\begin{itemize}
	\item[(i)] $\lim {\theta}_n =0$ and $\left\{ {\theta}_n\right\}$ is decreasing;
	\item[(ii)]$ \displaystyle\sum_{n=1}^{\infty} {\lambda}_n {\theta}_n=\infty$, ${\lambda}_n=o({\theta}_n)$;
	\item[(ii)]$\displaystyle \lim_{n\rightarrow \infty}\left(({\theta}_{n-1}/{\theta}_n)-1\right)/{\lambda}_n {\theta}_n=0,~~ \displaystyle\sum_{n=1}^{\infty} {\lambda}^2_n <\infty$.
\end{itemize}	
Suppose that the equation $Ax=0$ has a solution. Then, there exists a constant ${\gamma}_0 > 0$ such that if ${\lambda}_n\leq {\gamma}_0 {\theta}_n$ for all $n\in \N$, $\left\{x_n\right\}$ converges strongly to a solution of the equation $Ax=0$.
\end{corollary}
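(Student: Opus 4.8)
The plan is to run the accretive analogue of the two-part argument used for Theorem~\ref{t6}, replacing the Lyapunov functional ${\phi}_p(x,y)$ by $\|x-y\|^2$ and the duality map $J$ (wherever it occurs inside a pairing) by the single-valued normalized duality map $j$, which exists because a $2$-uniformly smooth space is smooth. Since $E$ is $2$-uniformly smooth, I would use the characteristic inequality that there is a constant $c>0$ with
$$\|x+y\|^2 \leq \|x\|^2 + 2\left\langle y, j(x)\right\rangle + c\|y\|^2 \quad \mbox{for all} \ x,y \in E,$$
which plays here the role that Lemma~\ref{l20} (the $V_p$ estimate) plays in the monotone theorem. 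Because $A$ is $m$-accretive, Remark~\ref{h3} gives that $A$ satisfies the range condition, so the resolvent $J_t:=(I+tA)^{-1}$ is single-valued and defined on all of $E$; taking $t=1/{\theta}_n$ produces the regularisation path $y_n:=(I+\tfrac{1}{{\theta}_n}A)^{-1}x_1$, which solves $Ay_n+{\theta}_n(y_n-x_1)=0$ and which, by the accretive counterpart of Theorem~\ref{t8} (Reich's theorem on strong convergence of the resolvent path in a uniformly smooth space), converges strongly to a zero $x^*$ of $A$.

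For the boundedness step I fix $x^*\in A^{-1}(0)$ and show by induction that $\|x_n-x^*\|^2\leq r$ for a suitably large $r$. Writing $w_n:=Ax_n+{\theta}_n(x_n-x_1)$ so that $x_{n+1}-x^*=(x_n-x^*)-{\lambda}_n w_n$, the smoothness inequality gives
$$\|x_{n+1}-x^*\|^2 \leq \|x_n-x^*\|^2 - 2{\lambda}_n\left\langle w_n, j(x_n-x^*)\right\rangle + c{\lambda}_n^2\|w_n\|^2.$$
Accretivity yields $\left\langle Ax_n-Ax^*, j(x_n-x^*)\right\rangle\geq 0$, the regularising term contributes ${\theta}_n\|x_n-x^*\|^2$ up to a quantity controlled by $\|x^*-x_1\|\,\|x_n-x^*\|$, boundedness of $A$ on the induction ball makes $\|w_n\|\leq M$, and the error $c{\lambda}_n^2 M^2\leq c{\gamma}_0{\lambda}_n{\theta}_n M^2$ is absorbed once ${\gamma}_0$ is small, closing the induction with a final bound of the form $(1-{\lambda}_n{\theta}_n)r<r$.

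For the convergence step I estimate $\|x_{n+1}-y_n\|^2$. The identity $w_n=(Ax_n-Ay_n)+{\theta}_n\big((x_n-x_1)-(y_n-x_1)\big)$, obtained from $Ay_n+{\theta}_n(y_n-x_1)=0$, together with accretivity and $\left\langle x_n-y_n, j(x_n-y_n)\right\rangle=\|x_n-y_n\|^2$, gives $\left\langle w_n, j(x_n-y_n)\right\rangle\geq{\theta}_n\|x_n-y_n\|^2$, hence
$$\|x_{n+1}-y_n\|^2 \leq (1-2{\lambda}_n{\theta}_n)\|x_n-y_n\|^2 + c{\lambda}_n^2 M^2.$$
I then relate $\|x_n-y_n\|$ to $\|x_n-y_{n-1}\|$ through $\|y_{n-1}-y_n\|$, which is bounded by a multiple of $\big(({\theta}_{n-1}/{\theta}_n)-1\big)$ exactly as in inequality~(\ref{e27}) (differencing the resolvent equation and pairing with $y_{n-1}-y_n$). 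Feeding this into the previous display yields a recursion $a_{n+1}\leq(1-2{\lambda}_n{\theta}_n)a_n+{\lambda}_n{\theta}_n{\sigma}_n+{\gamma}_n$ for $a_n:=\|x_n-y_{n-1}\|^2$, where hypotheses (i)--(iii) force ${\sigma}_n\to0$ and $\sum{\gamma}_n<\infty$; Lemma~\ref{l11} then gives $\|x_n-y_{n-1}\|\to0$, and combining with $y_{n-1}\to x^*$ yields $x_n\to x^*$.

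The hard part is not the algebra, which mirrors Theorem~\ref{t6} line for line, but supplying the two ingredients that are genuinely accretive rather than monotone: the strong convergence of the resolvent path $y_n=(I+\tfrac{1}{{\theta}_n}A)^{-1}x_1$ to a zero of $A$ (the accretive analogue of Theorem~\ref{t8}, which as stated concerns maps into $E^*$), and the estimate on $\|y_{n-1}-y_n\|$ in terms of $({\theta}_{n-1}/{\theta}_n)-1$ for the accretive resolvent. Both are classical in uniformly smooth spaces, but since they are not literally the statements quoted in the excerpt, the cleanest write-up would record them as a preliminary lemma before invoking Lemma~\ref{l11}. Note also that the smallness condition ${\lambda}_n\leq{\gamma}_0{\theta}_n$ is precisely what lets the $2$-uniform-smoothness error term $c{\lambda}_n^2\|w_n\|^2$ be absorbed into the ${\lambda}_n{\theta}_n$ contraction, and it is the regularisation by ${\theta}_n(x_n-x_1)$, not strong accretivity (which is \emph{not} assumed here), that drives the contraction.
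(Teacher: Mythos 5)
Your proposal is correct in outline, but it takes a genuinely different route from the paper, whose entire proof of this corollary is the single sentence that the result ``follows from Theorem \ref{t6} since uniformly smooth and uniformly convex spaces are more general.'' That one-line reduction is in fact the weaker of the two arguments, because the corollary's hypotheses do not match those of Theorem \ref{t6}: here $A$ maps $E$ into $E$ and is accretive (defined through $j(x-y)$), not monotone from $E$ into $E^*$; no strong accretivity is assumed, whereas Theorem \ref{t6} requires $\eta$-strong monotonicity; and the corollary assumes only $2$-uniform smoothness, not the uniform convexity that Theorem \ref{t6} also demands. Your direct argument addresses exactly these mismatches: you replace ${\phi}_p$ by the squared norm of differences, replace Lemma \ref{l20} by Xu's characteristic inequality for $2$-uniformly smooth spaces, and, crucially, observe that since $\left\langle x_n-y_n, j(x_n-y_n)\right\rangle=\|x_n-y_n\|^2$ exactly, the regularising term ${\theta}_n(x_n-x_1)$ alone yields the $(1-2{\lambda}_n{\theta}_n)$-contraction; this is why strong accretivity is dispensable here, while in Part 2 of the theorem's proof monotonicity only gives $\left\langle x_n-y_n, Jx_n-Jy_n\right\rangle\geq g(\|x_n-y_n\|)$ and the $\eta$-term was needed to dominate ${\phi}_p$. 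What each approach buys: the paper's reduction, if it worked, would be free, but it does not actually establish the statement; your route costs a full rerun of the two-part scheme (essentially reconstructing Chidume and Djitte's original argument in \cite{b6}) and needs one genuinely external ingredient, namely Reich's theorem on strong convergence of the resolvent path $(I+tA)^{-1}x_1$ in uniformly smooth spaces, the accretive analogue of Theorem \ref{t8} (Kido's result, being stated for monotone $A:E\to E^*$ with strictly convex dual, does not apply verbatim); the other ingredient you flag, the estimate $\|y_{n-1}-y_n\|\leq\left(({\theta}_{n-1}/{\theta}_n)-1\right)\|y_{n-1}-x_1\|$, is already proved by your pairing argument using accretivity alone. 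Recording Reich's theorem as a preliminary lemma, as you propose, would make your write-up a complete and self-contained proof, which the paper's is not.
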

\begin{proof}
The result follows from the Theorem \ref{t6} since uniformly smooth and uniformly convex spaces are more general.
\end{proof}

\begin{remark}
The Lyapunov functions which we introduced admit the generalized duality mapping. Therefore, the duality mapping, $J$ in our iteration is a generalized one while in Chidume and Idu \cite{b28}, $J$ is the normalized duality mapping. Clearly, our results show the efficacy of the new geometric properties in Banach spaces. The iterative algorithm study by Chidume and Djitte \cite{b6} has been successfully extended into uniformly smooth and uniformly convex Banach spaces for strongly monotone mappings. Also, our method of prove is constructive and is of independent interest. 
\end{remark}
\begin{remark}
 Prototype for our iteration parameters in Theorem \ref{t6} are, ${\lambda}_n=\frac{1}{(n+1)^a}$ and ${\theta}_n=\frac{1}{(n+1)^b}$, where $0 < b < a$ and $a + b < 1$.
\end{remark}

\vskip 0.5 truecm
\textbf{Acknowledgement:\\} The first author acknowledge with thanks the bursary and financial support from Department of Science and Technology and National Research Foundation, Republic of South Africa Center of Excellence in Mathematical and Statistical Sciences (DST-NRF CoE-MaSS) Doctoral Bursary. Opinions expressed and conclusions arrived at are those of the authors and are not necessarily to be attributed to the CoE-MaSS.

\end{document}